\documentclass[12pt, a4paper]{amsart}
\newtheorem{prop}{Proposition}[section]
\newtheorem{lema}[prop]{Lemma}
\newtheorem{teo}[prop]{Theorem}
\newtheorem{corolario}[prop]{Corollary}

\newtheorem{remark}[prop]{\sc Remark}

\newtheorem{obs}[prop]{\sc Observation}

\newcommand{\supp}{\mbox{supp}}
\newcommand{\uno}{1\!\!1}

\title[Valuations on star bodies]{Radial continuous rotation invariant valuations on star bodies}

\author{Ignacio Villanueva}

\address{Departamento de An\'alisis Matem\'atico \\
Facultad de Matem\'aticas \\ Universidad Complutense de Madrid \\
Madrid 28040}

\email{ignaciov@mat.ucm.es}

\thanks{Partially supported by  MINECO (grant MTM2011-26912) and  Comunidad de Madrid (grant QUITEMAD+-CM, ref. S2013/ICE-2801)}

\begin{document}

\begin{abstract}
We characterize the positive radial continuous and rotation invariant valuations $V$ defined on the star bodies of $\mathbb R^n$ as the applications on star bodies which admit an integral representation with respect to the Lebesgue measure. That is, $$V(K)=\int_{S^{n-1}}\theta(\rho_K)dm,$$ where $\theta$ is a positive continuous function, $\rho_K$ is the radial function associated to $K$ and $m$ is the Lebesgue measure on $S^{n-1}$. As a corollary, we obtain that every such valuation can be uniformly approximated on bounded sets by a linear combination of dual quermassintegrals. 
\end{abstract}

\subjclass{}

\keywords{convex geometry, valuations, star bodies}

\maketitle

\section{Introduction}

Valuations can be thought of as a generalization of the notion of measure. Valuations on convex bodies have been studied for a long time now, starting with the solution of Hilbert's Third Problem in 1901. 

Since then until today, valuations and their study have become a most relevant area of study in Convex Geometry. See, for instance, \cite{Alesker}, \cite{Alesker3}, \cite{Klain96}, \cite{Klain97}, \cite{Lu1}, \cite{Lu2}. See also \cite{Ma}, \cite{Ko}, \cite{Ts} for recent developments related to our paper. References in \cite{Lu1}, \cite{Lu2} provide a broad vision of the field. 

In the 1950's, Hadwiger initiated a systematic study of valuations on convex bodies and, in particular, he proved the result which we now know as Hadwiger's Theorem, which characterizes continuous rotation and translation invariant valuations on convex bodies as linear combinations of the quermassintegrals \cite{Had}. In \cite{Alesker}, Alesker studies the  valuations on convex bodies which are only rotation invariant. 

Valuations on convex bodies belong naturally to the Brunn-Minkowski Theory, one of the cornerstones of modern geometry. Brunn-Minkowski Theory has been extended and modified in several directions. One of the main theories derived from it is what we now know as Dual Brunn-Minkowski Theory. In this dual theory convex bodies, Minkowski addition, the Hausdorff metric and mixed volumes are replaced by star bodies, radial addition, radial metric and dual mixed volumes, respectively. Ever since it was initiated in \cite{Lut_mv1}, the dual Brunn-Minkowski theory has been broadly developed and has been successfully applied in several areas. In particular, it played a key role in the solution of the Busemann–-Petty problem \cite{Ga1}, \cite{Ga2}, \cite{Zh}.

The study of valuations on star sets was initiated in \cite{Klain96}, \cite{Klain97}, where Klain studies rotation invariant valuations. The valuations studied in those papers are defined on $L^n$-stars, star sets whose radial function belongs to $L^n(S^{n-1})$.  

A  star body (or in general a star set),  $K\subset \mathbb R^n$ is characterized by its {\em radial function} $\rho_K:S^{n-1}\longrightarrow \mathbb R^+$, which assigns to each direction in $\mathbb R^n$ the length of $K$ along that direction (see Section \ref{sectionnotation} for the proper definitions). 

In this note, we characterize the positive rotation invariant valuations on star bodies which are  continuous with respect to the radial topology. Note that 
$L^n$-stars form a much bigger class than star bodies (star sets whose radial function is continuous). Therefore, our work can be viewed as a generalization of Klain's work in the case of positive valuations.

Our main result says 

\begin{teo}\label{main} 
If $V:\mathcal S_0^n\longrightarrow \mathbb R^+$ is a rotation invariant and radial continuous valuation on the $n$-dimensional star bodies $\mathcal S_0^n$, verifying that $V(\{0\})=0$, then there exists a continuous function $\theta:[0,\infty) \longrightarrow \mathbb R^+$ such that $\theta(0)=0$ and such that, for every $K\in \mathcal S_0^n$,  
$$V(K)=\int_{S^{n-1}} \theta(\rho_K(t)) dm(t),$$
where $m$ is the Lebesgue measure on $S^{n-1}$ and $\rho_K$ is the radial function of $K$. 

Conversely, let $\theta:\mathbb R^+\longrightarrow \mathbb R$ be a continuous function. Then the application  $V:\mathcal S_0^n \longrightarrow \mathbb R$ given by 
$$V(K)=\int_{S^{n-1}} \theta(\rho_K(t)) dm(t)$$ is a radial continuous rotation invariant valuation. 
\end{teo}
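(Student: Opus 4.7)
The proof naturally splits into the (easy) converse and the (substantive) forward direction.

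For the converse, I would verify the three properties of $V(K) = \int_{S^{n-1}} \theta(\rho_K)\,dm$ directly. The valuation property reduces, after integration, to the pointwise identity
$$\theta(\max(a,b)) + \theta(\min(a,b)) = \theta(a) + \theta(b),$$
which holds for any $\theta$; here I use that for star bodies $\rho_{K\cup L} = \max(\rho_K,\rho_L)$ and $\rho_{K\cap L} = \min(\rho_K,\rho_L)$. Rotation invariance follows from rotation invariance of $m$. For radial continuity, uniform convergence $\rho_{K_j}\to \rho_K$ keeps the sequence uniformly bounded, and uniform continuity of $\theta$ on bounded intervals yields uniform convergence of the integrands.

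For the forward direction, I would first define the candidate function on $[0,\infty)$: since the Euclidean ball $B_r$ is rotation invariant, a rotation invariant $V$ forces $V(B_r)$ to be proportional to $\theta(r)$, and the natural normalization is $\theta(r) := V(B_r)/m(S^{n-1})$, which makes the identity hold on balls automatically. Continuity of $\theta$ is inherited from radial continuity of $V$ along the continuous path $r\mapsto B_r$, and $\theta(0)=0$ from $V(\{0\})=0$.

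The heart of the argument is to extend the identity from balls to arbitrary star bodies. My plan is to treat first a dense class of ``simple'' star bodies whose radial functions approximate step functions on $S^{n-1}$. Given a finite measurable partition $A_1,\dots,A_k$ of $S^{n-1}$ and radii $r_1,\dots,r_k$, I would construct continuous star bodies whose radial functions equal $\sum r_i\chi_{A_i}$ outside $\epsilon$-neighborhoods of the $\partial A_i$, and show that their $V$-values tend to $\sum \theta(r_i) m(A_i)$ as $\epsilon \to 0$. By rotation invariance, the value of $V$ on such a simple body depends only on the radii and on the spherical measures of the pieces, defining a function $F(r_1,\dots,r_k;m(A_1),\dots,m(A_k))$. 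The valuation property, applied to pairs of simple bodies whose $\max$ and $\min$ are again simple, should force $F$ to be additive across the partition and, in the boundary case where all but one $r_i$ vanish, to agree with $\sum \theta(r_i) m(A_i)$. Once the formula is established on this class, uniform density of step functions in $C(S^{n-1})$ together with radial continuity of $V$ extend it to all of $\mathcal{S}_0^n$.

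The main obstacle I anticipate lies in the interplay between the continuity requirement on radial functions of star bodies and the combinatorial machinery we want to run with step functions: every intermediate object must genuinely lie in $\mathcal{S}_0^n$, so each valuation identity must be applied to smoothed versions and then passed to a limit. Controlling this limit so that it exactly recovers the additivity across pieces (and the single-cap values) is where the real work should concentrate, and an auxiliary monotonicity or modulus-of-continuity estimate for $V$ on these smoothings will likely be needed to handle the $\epsilon \to 0$ passage uniformly.
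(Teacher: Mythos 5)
Your converse direction and your normalization $\theta(r)=V(rB_{\mathbb R^n})/m(S^{n-1})$ match the paper, and your instinct that the work lies in passing from balls to bodies whose radial functions approximate step functions is right. But there is a genuine gap at the pivotal claim that ``by rotation invariance, the value of $V$ on such a simple body depends only on the radii and on the spherical measures of the pieces.'' Rotation invariance only gives $V(K)=V(\varphi(K))$ for a single rotation $\varphi$ applied to the whole body; two partitions $\{A_i\}$ and $\{A_i'\}$ with $m(A_i)=m(A_i')$ need not be congruent under any rotation, so nothing in your setup rules out a dependence on the shape of the pieces. To conclude ``depends only on $m(A_i)$'' you need to know that, for each fixed height $\lambda$, the set function $A\mapsto \lim_{\epsilon\to 0}V(\mbox{smoothing of }\lambda\chi_A)$ is a countably additive, regular, finite, rotation-invariant Borel measure, and then invoke the uniqueness of the rotation-invariant measure on $S^{n-1}$. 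Establishing precisely this is the core of the paper: for each $\lambda$ it builds a regular content $\zeta_\lambda(K)=\inf\{\tilde V(f):K\prec f/\lambda\}$ on compact sets, extends it to a measure $\nu_\lambda$, and only then concludes $\nu_\lambda=\theta(\lambda)m$. Your function $F$ is not well defined without this, and the additivity across pieces cannot be extracted from the valuation identity alone.

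A second, related gap is the control of the $\epsilon\to 0$ limit near the boundaries $\partial A_i$, which you correctly flag but do not supply. The estimate actually needed is an absolute-continuity statement $V\ll m$: for every $\lambda>0$ there is $\vartheta_\lambda$ such that $\tilde V(f)\le \vartheta_\lambda\, m(G)$ whenever $\|f\|_\infty\le\lambda$ and $\mathrm{supp}(f)\subset G$. This is not automatic from radial continuity --- a priori a positive valuation could assign large values to bodies supported over spherical sets of small measure --- and the paper obtains it from a separate, preliminary outer-measure construction ($\mu_{V,\lambda}^*$ in Section 3), again using finiteness plus rotation invariance and Haar uniqueness to see that $\mu_{V,\lambda}$ is a finite multiple of $m$. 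Both measure constructions are indispensable: the first yields the modulus-of-continuity input you anticipate needing, and the second converts your heuristic ``value depends only on the measures of the pieces'' into a theorem.
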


We believe the result remains true if we remove the hypothesis that $V$ is positive and $V(\{0\})=0$, but at the moment we have not found a proof for this. Obviously, an analog of the result can be stated for negative valuations. 

We can define polynomial valuations as those induced by a polynomial on the star bodies (with respect to the radial sum), see Section \ref{polinomicas}. Note that this definition is {\em not} the exact analog of polynomial valuations for convex bodies defined in \cite{KhP}. 

Polynomial valuations on star bodies can be easily characterized using the results of \cite{JiVi}. Rotation invariant polynomial valuations turn out to be constant multiples of  dual quermassintegrals. With this characterization, and the  Stone-Weierstrass Theorem, one obtains immediately the following corollary to Theorem \ref{main}. It is formally related to \cite[Theorem A]{Alesker}, and it can be considered as a weak form of a  dual Hadwiger's Theorem. 

\begin{corolario}\label{maincorolario}
Every radial continuous rotation invariant valuation $V:\mathcal S_0^n\longrightarrow \mathbb R^+$ with $V(\{0\})=0$ can be approximated uniformly on bounded subsets of $\mathcal S_0^n$ by dual quermassintegrals. 
\end{corolario}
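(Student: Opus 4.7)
The plan is to combine the integral representation from Theorem \ref{main} with the Weierstrass approximation theorem. By Theorem \ref{main}, write $V(K)=\int_{S^{n-1}}\theta(\rho_K(t))\,dm(t)$ for some continuous $\theta\colon[0,\infty)\to\mathbb R^+$ with $\theta(0)=0$.

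Given a bounded subset $\mathcal B\subset\mathcal S_0^n$ in the radial metric, one first extracts a uniform bound $R>0$ such that $\rho_K(t)\le R$ for all $K\in\mathcal B$ and $t\in S^{n-1}$, which reduces the task to approximating $\theta$ uniformly on the compact interval $[0,R]$. Fix $\epsilon>0$. By the Weierstrass approximation theorem, there is a polynomial $p(r)=\sum_{i=0}^N a_i r^i$ with $\sup_{r\in[0,R]}|\theta(r)-p(r)|<\epsilon/m(S^{n-1})$. Define
$$\tilde V(K):=\int_{S^{n-1}}p(\rho_K(t))\,dm(t)=\sum_{i=0}^N a_i \int_{S^{n-1}}\rho_K(t)^i\,dm(t).$$

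Each term $\int_{S^{n-1}}\rho_K^i\,dm$ is, up to a universal normalizing constant, a dual quermassintegral of $K$ (the term $i=0$ yielding the trivial one), so $\tilde V$ is by construction a linear combination of dual quermassintegrals. Integrating the pointwise estimate $|\theta(\rho_K(t))-p(\rho_K(t))|<\epsilon/m(S^{n-1})$ over $S^{n-1}$ then gives $|V(K)-\tilde V(K)|<\epsilon$ uniformly for $K\in\mathcal B$, which is the desired conclusion. There is no substantive obstacle once Theorem \ref{main} is in hand: the only routine checks are that radial boundedness of $\mathcal B$ supplies the uniform sup-norm bound on the radial functions, and that $\int_{S^{n-1}}\rho_K^i\,dm$ is indeed, up to normalization, the corresponding dual quermassintegral (the identification alluded to in the discussion preceding the corollary).
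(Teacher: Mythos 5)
Your argument is correct and is essentially identical to the paper's own proof: both invoke the integral representation of Theorem \ref{main}, use the (Stone--)Weierstrass theorem to approximate $\theta$ uniformly on $[0,M]$ where $M$ bounds the radial functions of the bounded family, and identify $\int_{S^{n-1}}\rho_K^k\,dm$ with a normalized dual quermassintegral $n\tilde W_{n-k}(K)$ before integrating the pointwise estimate. The only difference is the cosmetic placement of the factor $m(S^{n-1})$ in the error bound.
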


Most of the paper is devoted to the proof of Theorem \ref{main}. This proof is somehow long and technical. We have not found a way to simplify it significantly. On the other hand, the statement of the result is very much related to close results in the area and it probably seemed a ``reasonable conjecture'' for  a long time now, at least since the publication of \cite{Klain96, Klain97}. 

In the following paragraphs we sketch a description of the proof. The reader can find all the proper definitions in Section \ref{sectionnotation}.

The proof will be done with functional analysis and measure theory techniques. The  link is provided by  the natural bijection between rotationally invariant radial continuous valuations  $V:\mathcal S_0^n\longrightarrow \mathbb R^+$ and applications $\tilde{V}:C(S^{n-1})^+\longrightarrow \mathbb R^+$  which are rotationally invariant, continuous,  and verify $\tilde{V}(f)+ \tilde{V}(g)=\tilde{V}(f\vee g)+ \tilde{V}(f\wedge g)$. Our goal will be to obtain an integral representation for these applications $\tilde{V}$. 

The approach is similar to the  proof of the Riesz Representation Theorem for the dual of a $C(K)$ space: we want to define a measure on the Borel sets of $S^{n-1}$ based on $\tilde{V}$. The difference, and the difficulties, arise from the fact that now $\tilde{V}$ is in general not linear. 

For a better understanding of the relation of our result and techniques with the results and techniques in  \cite{Klain96, Klain97}, note that in those papers the valuations are supposed to be defined on $L^n$-stars. As mentioned before,  an $L^n$-star is a  star set whose radial function belongs to $L^n(S^{n-1})$. In particular, for every Borel set $A\subset S^{n-1}$, the characteristic function $\chi_A$ defines an $L^n$-star. Therefore,  one can consider the set function defined on the Borel sets of $S^{n-1}$ which maps a set  $A$ to the number obtained by applying $V$ to  the star set whose radial function is $\chi_A$. It is easy to see that this set function is a measure.  In the case we study, since $\chi_A$ is continuous only in trivial cases,  this star set is not a star body, and we can not apply $V$ to it.

To define the measure in our case we must proceed in several steps.  First, for every $\lambda>0$ we can consider the restriction of $V$ to the radial bodies contained in $\lambda$ times the unit ball of $\mathbb R^n$. We construct an outer measure, and an associated measure, based on this restriction of $V$. The rotational invariance of $V$ translates into the rotational invariance of this measure, and therefore it will be a constant multiple of $m$, the Lebesgue measure on $S^{n-1}$. This construction is done in Section \ref{primeramedida}. This measure will {\em not} be the  one we are looking for. But it will allow us to guarantee that $V$ is continuous with respect to $m$ in the natural sense. 

Once we know that $V$ is continuous with respect to  $m$, for very $\lambda>0$ we can define a {\em content} based on $V$. This content will allow us to define a second measure associated to it. This second measure will also be rotationally invariant and, therefore, again a constant multiple of the Lebesgue measure. This construction is done in Section \ref{segundamedida}.

In Section \ref{demodemain} we prove that this second measure allows us to obtain the integral representation of Theorem \ref{main}. 

Finally, in Section \ref{polinomicas} we characterize polynomial valuations on star-bodies. We show that if they are rotationally invariant then they are constant multiples of the dual quermassintegrals and we prove Corollary \ref{maincorolario}.

\section{Notation and previous results}\label{sectionnotation}


 

A set $L\subset \mathbb R^n$ is {\em star shaped at $0$}, or, more simply, a {\em star set} if it contains the origin and every line through $0$ that meets $L$ does so in a (possibly degenerate) line segment. We denote by $\mathcal S^n$ the set of the star sets of $\mathbb R^n$. 

Given a star set $L$, we define its {\em radial function} $\rho_L$ by


$$\rho_L(x)=  \sup \{c\geq 0 \, : \, cx\in L\}.$$

Clearly, radial functions are totally characterized by their restriction to $S^{n-1}$, the euclidean unit sphere in $\mathbb R^n$, so from now on we consider them defined on $S^{n-1}$.

Conversely, given a positive function $f:S^{n-1}\longrightarrow \mathbb R^+=[0,\infty)$  there exists a star set $L_f$ such that $f$ is the radial function of $L_f$.

A star set $L$ is called a {\em star body} if and only if $\rho_L$ is continuous. We denote by $\mathcal S_0^n$ the set of star bodies. 

Given two sets $L,M\in \mathcal S^n$, we define their {\em radial sum} as the star set $L\tilde{+}M$ whose radial function is $\rho_L+\rho_M$. Note that the radial sum of two star bodies is again a star body.

In the space of convex bodies, the natural topology is given by the Hausdorff metric. Its analog for star sets and bodies is the {\em radial topology}, induced by the {\em radial metric}. The radial metric  is  defined by $$\delta(K,L)=\inf\{\lambda\geq 0 \mbox{ such that } K\subset L\tilde{+} \lambda B_n, L\subset K\tilde{+} \lambda B_n\}.$$

It is very easy to see that the radial metric can equivalently be defined by  $$\delta(K,L)=\|\rho_K-\rho_L\|_\infty.$$

\smallskip 

We say that an application $V:\mathcal S_0^n\longrightarrow \mathbb R$ is a {\em valuation} if, for every pair of star bodies $K,L$, $$V(K\cup L)+V(K\cap L)=V(K)+ V(L).$$

\smallskip 

Given two functions $f_1, f_2:S^{n-1}\longrightarrow \mathbb R$ we denote their supremum and infimum by $$(f_1\vee f_2)(t)=\sup \{f_1(t), f_2(t)\}$$
$$(f_1\wedge f_2)(t)=\inf \{f_1(t), f_2(t)\}.$$ 

Given two star bodies $K,L$, both $K\cup L$ and $K\cap L$ are star bodies, and it is easy to see that $\rho_{K\cup L}=\rho_K\vee \rho_L$ and $\rho_{K\cap L}=\rho_K\wedge \rho_L$.

Given a topological space $X$ and a set $A\subset X$, we denote the closure of $A$ by $\overline{A}$. 
Given  a function $f:X\longrightarrow \mathbb R$, we define the support of $f$ by $supp(f)=\overline{\{x\in X \mbox{ such that } f(x)\not = 0\}}$. Given  a function $f:X\longrightarrow \mathbb [0,1]$, an open set $G\subset X$, and a compact set $K\subset X$, we say that $f\prec G$ if $\supp(f)\subset G$ and we say that $K\prec f$ if $f(t)=1$ for every $t\in K$.

 $\uno:S^{n-1}\longrightarrow \mathbb R$ is the function constantly equal to 1. We denote the euclidean unit ball of $\mathbb R^n$ by $B_{\mathbb R^n}$.

We denote by $\Sigma_n$ the  Borel $\sigma$-algebra of $S^{n-1}$. That is, the smallest $\sigma$-algebra that contains the open sets of $S^{n-1}$. 

$S(\Sigma_n)$ denotes  the normed space of the simple functions over $\Sigma_n$, endowed with the supremum norm. $B(\Sigma_n)$ denotes its completion. $C(S^{n-1})$ is the space of continuous (real valued) functions defined on $S^{n-1}$. $C(S^{n-1})^+$ denotes the positive functions of $C(S^{n-1})$. It is well known that $C(S^{n-1})$ is naturally isometrically contained in $B(\Sigma_n)$. 
We will use $C(S^{n-1})^*, B(\Sigma_n)^*$ to denote the topological duals of $C(S^{n-1})$ and $B(\Sigma_n)$ respectively.

We say that the set function $\mu:\Sigma_n \longrightarrow \mathbb R$ is a {\em signed measure} if it is countably additive over disjoint sets.

If $\mu$ is positive, we will call it simply a measure. 





\section{Construction of an outer measure}\label{primeramedida}

Let $V$ be a valuation as in the hypothesis of Theorem \ref{main}. The first step towards our proof is the construction of an outer measure associated to the valuation $V$.

We can define an application  $\tilde{V}:C(S^{n-1})^+\longrightarrow \mathbb R^+$ associated to $V$ in the natural way: for every $f\in C(S^{n-1})^+$, we define $\tilde{V}(f)=V(L_f)$, where $L_f$ is the radial body associated to $f$. 

For every $\lambda>0$ we will construct an 
outer measure $\mu_{V,\lambda}^*$ associated to $V$. For simplicity in the notation we define the outer measure for the case $\lambda=1$, and we denote it just $\mu_V^*$. The extension to a general $\lambda>0$ is simple, and it is explicitly described at the end of this section. 

We use $\mathcal P(S^{n-1})$ for the set of the subsets of $S^{n-1}$. We recall that a set function  $\mu^*:\mathcal P(S^{n-1})\longrightarrow [0, +\infty]$ is an {\em outer measure} if  
\begin{itemize}
\item[(i)] \noindent $\mu^*(\emptyset)=0$
\item[(ii)] \noindent $\mu^*$ is monotone. That is, for every $A\subset B\subset S^{n-1}$, $\mu^*(A)\leq \mu^*(B)$.
\item[(iii)] \noindent $\mu^*$ is countably subadditive. That is, for every sequence  $(A_i)_{i\in \mathbb N}$ of sets in $\mathcal P(S^{n-1})$, $$\mu^*\left(\cup_{i\in \mathbb N} A_i\right)\leq \sum_{i\in \mathbb N} \mu^*(A_i).$$
\end{itemize}

\smallskip

We start defining our outer measure for open sets: 
For every open set $G\subset S^{n-1}$ we define $$\mu_1^*(G)=\sup \{\tilde{V}(f): \, f\prec G\}.$$
Now, for every $A\subset S^{n-1}$, we define \begin{equation}\label{defout}\mu_V^*(A)=\inf\{ \mu_1^*(G):\,  A\subset G, \, G \mbox{ an open set }\}.\end{equation}

It is very easy to see that, for every open set $G\subset S^{n-1}$, $\mu_1^*(G)=\mu^*_V(G)$:  It is clear that $\mu_V^*(G)\leq \mu^*_1(G)$ and the reverse inequality follows immediately after  noting that $\mu_1^*$ is  monotone on open sets.  

Therefore we drop the notation $\mu_1^*$ and we denote both by  $\mu_V^*$.

We have to check that $\mu_V^*$ is indeed an 
outer measure. First we need some observations. 


If $f_1, f_2$ are both continuous and positive, so are $f_1\vee f_2$ and $f_1\wedge f_2$. In this case,  if  $K_1, K_2$ are the star bodies associated to $f_1, f_2$,  then   $f_1\vee f_2$, $f_1\wedge f_2$ are the radial functions of  $K_1\cup K_2$ and $K_1\cap K_2$ respectively. 

Therefore, it follows from the definition of valuation that, for every $f_1, f_2\in C(S^{n-1})^+$,  
\begin{equation}\label{aditividad}
\tilde{V}(f_1)+\tilde{V}(f_2)=\tilde{V}(f_1\vee f_2)+ \tilde{V}(f_1\wedge f_2).
\end{equation}

Now, it is easy to prove by induction the following result, similar to the inclusion-exclusion principle: 

\begin{lema}\label{uniones}
Let $N\in \mathbb N$ and let $f_1, \ldots, f_N \in C(S^{n-1})^+$. Then $$V\left(\bigvee_{i=1}^N f_i\right)= \sum_{1\leq i\leq N} \tilde{V}(f_i) - \sum_{1\leq i_1<i_2\leq N} \tilde{V}(f_{i_1}\wedge f_{i_2}) $$ $$ +\sum_{1\leq i_1 < i_2 <i_3\leq N} \tilde{V}(f_{i_1}\wedge f_{i_2}\wedge f_{i_3})-\cdots + (-1)^{N-1} \tilde{V}(f_1\wedge f_2\wedge\cdots \wedge f_{N}).$$
\end{lema}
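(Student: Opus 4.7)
The plan is to proceed by induction on $N$, with the only nontrivial input being the two-function identity \eqref{aditividad} together with the lattice distributivity of $\vee$ over $\wedge$ for real-valued functions (pointwise).

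The base case $N=1$ is trivial, and $N=2$ is precisely \eqref{aditividad} rearranged as $\tilde{V}(f_1 \vee f_2) = \tilde{V}(f_1) + \tilde{V}(f_2) - \tilde{V}(f_1 \wedge f_2)$. For the inductive step, assume the formula holds for any collection of $N-1$ positive continuous functions, and set $g = \bigvee_{i=1}^{N-1} f_i$ and $h = f_N$. Applying \eqref{aditividad} to $g$ and $h$ yields
$$\tilde{V}\left(\bigvee_{i=1}^N f_i\right) = \tilde{V}(g \vee h) = \tilde{V}(g) + \tilde{V}(h) - \tilde{V}(g \wedge h).$$
The key algebraic observation is that, pointwise on $S^{n-1}$, the lattice distributive law gives
$$g \wedge h = \left(\bigvee_{i=1}^{N-1} f_i\right) \wedge f_N = \bigvee_{i=1}^{N-1} (f_i \wedge f_N).$$
Thus both $\tilde{V}(g)$ and $\tilde{V}(g \wedge h)$ are of the form ``$\tilde{V}$ applied to a join of $N-1$ positive continuous functions,'' and the inductive hypothesis applies to each.

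Apply the inductive hypothesis to $\tilde{V}(g)$ using the functions $f_1, \ldots, f_{N-1}$, and to $\tilde{V}(g \wedge h)$ using the functions $f_1 \wedge f_N, \ldots, f_{N-1} \wedge f_N$. Noting that $(f_{i_1} \wedge f_N) \wedge \cdots \wedge (f_{i_k} \wedge f_N) = f_{i_1} \wedge \cdots \wedge f_{i_k} \wedge f_N$, the expansion of $-\tilde{V}(g \wedge h)$ produces exactly the terms in the inclusion-exclusion formula that involve the index $N$ (with the correct signs after the leading minus sign is distributed), while the expansion of $\tilde{V}(g)$ produces exactly the terms not involving $N$. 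Adding $\tilde{V}(h) = \tilde{V}(f_N)$ supplies the singleton term $\{N\}$, completing the formula for $N$ functions.

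The only real obstacle is the bookkeeping: one must verify that every subset $\{i_1 < \cdots < i_k\} \subset \{1, \ldots, N\}$ appears exactly once and with the sign $(-1)^{k-1}$, which is immediate after splitting subsets according to whether they contain $N$. Since $f_1, \ldots, f_N \in C(S^{n-1})^+$ implies that all finite joins and meets remain in $C(S^{n-1})^+$ (so $\tilde{V}$ is defined on every term that appears), no regularity issues arise.
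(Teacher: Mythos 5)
Your proof is correct and is exactly the induction the paper has in mind (the paper merely asserts that the lemma "is easy to prove by induction" from Equation (\ref{aditividad})); the distributive law $\left(\bigvee_{i=1}^{N-1} f_i\right)\wedge f_N=\bigvee_{i=1}^{N-1}(f_i\wedge f_N)$ and the split of subsets according to whether they contain $N$ are precisely the intended bookkeeping.
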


The good behaviour of linear functionals with respect to the sum of functions is replaced now by the good behaviour described in Lemma \ref{uniones} of valuations with  respect to the supremum of functions. For this reason, we will need  ``partitions of the unity'' through suprema, rather than sums. 

We say that a collection of subsets $\mathcal G$ of a topological space $X$ is {\em locally finite} if for any $x\in X$ there exists a neighborhood $U_x$ of $x$ such that $U_x$ intersects only finitely many subsets that belong to $\mathcal G$. 

Similarly, we say that a family $\{\varphi_i: i\in I\}$ of continuous functions $\varphi_i\in C(X)$ is locally finite if the family $\{\supp(\varphi_i): i\in I\}$ is locally finite.  

The following lemma is well known. We state it for completeness.

\begin{lema}\label{shrinking}
Let $X$ be a paracompact Hausdorff space (in particular $X$ can be a subset of $S^{n-1}$). Let $\{G_i: i\in I\}$ be an open cover of $X$. Then there exists a locally finite  open cover $\{V_i: i\in I\}$ such that $\overline{V}_i\subset G_i$ for every $i\in I$, where $\overline{V}$ denotes the closure of $V$. 
\end{lema}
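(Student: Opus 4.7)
The plan is to decompose the statement into two classical ingredients: every paracompact Hausdorff space is normal, and the shrinking lemma for locally finite open covers of a normal space. Paracompactness enters twice: once to produce a locally finite refinement of $\{G_i\}$, and once (implicitly) through normality, which provides the room to actually perform the shrinking.

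First I would reduce to the case of a locally finite open cover indexed by the same set $I$. By paracompactness, $\{G_i:i\in I\}$ admits a locally finite open refinement $\{W_\alpha:\alpha\in A\}$. For each $\alpha$ choose $i(\alpha)\in I$ with $W_\alpha\subset G_{i(\alpha)}$, and set $H_i=\bigcup_{i(\alpha)=i}W_\alpha$ (with $H_i=\emptyset$ if no such $\alpha$ exists). Then $\{H_i:i\in I\}$ is an open cover of $X$ with $H_i\subset G_i$, and it is locally finite: if a neighborhood $U$ of $x$ meets only $W_\alpha$ for $\alpha$ in a finite set $F$, then $U$ meets $H_i$ only for $i\in\{i(\alpha):\alpha\in F\}$. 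So it suffices to produce open $V_i$ covering $X$ with $\overline{V}_i\subset H_i$.

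Next, using that paracompact Hausdorff implies normal, I would run the standard Zorn-type argument on the poset $\mathcal P$ of pairs $(J,\{V_i\}_{i\in J})$ where $J\subset I$, each $V_i$ is open with $\overline{V}_i\subset H_i$, and $\{V_i\}_{i\in J}\cup\{H_i\}_{i\in I\setminus J}$ covers $X$, ordered by coordinate-wise extension. Every chain has an upper bound, obtained by taking the union of indexing sets and the union of assignments: the covering property is preserved precisely because the local finiteness of $\{H_i\}$ ensures that at any point $x\in X$ only finitely many indices $i$ satisfy $x\in H_i$, so one can pass to a single chain element that already handles all of them. By Zorn's lemma choose a maximal element $(J,\{V_i\})$. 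If there were $j\in I\setminus J$, the set $F=X\setminus\bigl(\bigcup_{i\in J}V_i\cup\bigcup_{i\in I\setminus J,\,i\neq j}H_i\bigr)$ would be closed and contained in $H_j$; normality then yields an open $V_j$ with $F\subset V_j\subset\overline{V}_j\subset H_j$, contradicting maximality. Hence $J=I$.

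The resulting $\{V_i\}$ is automatically locally finite because $V_i\subset H_i$ and $\{H_i\}$ is locally finite. The main obstacle is the Zorn's lemma step, specifically the verification that upper bounds of chains still cover $X$; this is precisely where the local finiteness of the refined cover $\{H_i\}$ (and hence the paracompactness hypothesis on $X$) is used in an essential way. The separation of $F$ from $X\setminus H_j$ at the maximality step is a routine appeal to normality and presents no real difficulty.
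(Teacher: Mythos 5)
Your proof is correct. Note that the paper does not actually prove this lemma—it states it as "well known" and moves on—so there is no argument to compare against; what you have written is a complete and accurate rendering of the standard proof. Both ingredients are handled properly: the re-indexing of the locally finite refinement $\{W_\alpha\}$ into $\{H_i:i\in I\}$ preserves local finiteness for exactly the reason you give, and the Zorn's lemma shrinking argument is sound, with the point-finiteness of $\{H_i\}$ correctly identified as the reason an upper bound of a chain still covers $X$ (at a point $x$ not yet covered by any $V_i$, the finitely many indices $i$ with $x\in H_i$ can all be absorbed into a single chain element). The appeal to Dieudonn\'e's theorem that paracompact Hausdorff spaces are normal is legitimate at this level of generality, and the final observation that $V_i\subset H_i$ forces local finiteness of $\{V_i\}$ closes the argument.
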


We can now proceed similarly as in the case of the usual partitions of  unity and we can prove the next lemma. It is probably well known, but we have not found a reference for it. We state it in more generality than we actually need, since we will apply it in the case of finite families of open sets. 

\begin{lema}\label{partition}
Let $\{G_i: i\in I\}$ be a family of open subsets of $S^{n-1}$. Let $X=\cup_{i\in I} G_i$. Then, for every $i\in I$ there exists a function $\varphi_i: X \longrightarrow [0,1]$ continuous in $X$ verifying  $\varphi_i\prec G_i$ and such that $\bigvee_{i\in I} \varphi_i=\uno$ in $X$. 
\end{lema}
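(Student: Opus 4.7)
The plan is to mimic the standard construction of a partition of unity, but replacing the summation step with Urysohn bump functions so that the combining operation becomes a supremum rather than a sum.

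First I would apply Lemma \ref{shrinking} to the open cover $\{G_i: i\in I\}$ to obtain a locally finite open cover $\{V_i: i\in I\}$ of $X$ with $\overline{V_i}\subset G_i$ for every $i\in I$. Then I would apply Lemma \ref{shrinking} a second time to the cover $\{V_i: i\in I\}$ to obtain an open cover $\{W_i: i\in I\}$ of $X$ with $\overline{W_i}\subset V_i$ for every $i\in I$. Since $S^{n-1}$ is compact metric, hence normal, so is the open subset $X$ in the hereditary sense required for Urysohn's lemma. For each $i\in I$, Urysohn's lemma yields a continuous function $\varphi_i:X\longrightarrow [0,1]$ such that $\varphi_i=1$ on $\overline{W_i}$ and $\varphi_i=0$ on $X\setminus V_i$. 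In particular $\supp(\varphi_i)\subset \overline{V_i}\subset G_i$, hence $\varphi_i\prec G_i$.

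Next I would check the two defining properties. For the covering property, given any $x\in X$, the cover $\{W_i: i\in I\}$ provides some $i_0$ with $x\in W_i\subset \overline{W_{i_0}}$, so $\varphi_{i_0}(x)=1$, and since $0\leq \varphi_i\leq 1$ for all $i$, we conclude $\bigvee_{i\in I}\varphi_i(x)=1$. The continuity of $\bigvee_{i\in I}\varphi_i$ is the point where some care is required: the pointwise supremum of an arbitrary family of continuous functions need not be continuous. However, by local finiteness of $\{V_i: i\in I\}$, for each $x\in X$ there is a neighborhood $U_x$ intersecting only finitely many of the $V_i$, say $V_{i_1},\ldots,V_{i_k}$. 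Since $\supp(\varphi_i)\subset \overline{V_i}$, local finiteness of the family $\{\overline{V_i}\}$ (which follows from that of $\{V_i\}$) implies that on a possibly smaller neighborhood of $x$, $\varphi_i\equiv 0$ for every $i\notin\{i_1,\ldots,i_k\}$. Hence on that neighborhood $\bigvee_{i\in I}\varphi_i=\varphi_{i_1}\vee\cdots\vee\varphi_{i_k}$, which is a finite supremum of continuous functions and therefore continuous. As continuity is a local property, $\bigvee_{i\in I}\varphi_i$ is continuous on all of $X$.

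The main obstacle I foresee is exactly the continuity of the supremum over a possibly uncountable family; everything else is a routine adaptation of the classical partition-of-unity argument. The local finiteness supplied by the shrinking lemma is precisely what resolves this obstacle, reducing the global supremum to a finite one in a neighborhood of each point.
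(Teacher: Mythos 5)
Your proof is correct and follows essentially the same route as the paper's: two applications of Lemma \ref{shrinking} to obtain $W_i\subset\overline{W}_i\subset V_i\subset\overline{V}_i\subset G_i$, followed by Urysohn's lemma. The only difference is your added discussion of the continuity of $\bigvee_{i\in I}\varphi_i$ via local finiteness, which is harmless but unnecessary: the lemma only asserts continuity of each individual $\varphi_i$, and once the supremum is shown to equal $\uno$ pointwise on $X$ it is a constant function and hence trivially continuous.
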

\begin{proof}
We apply Lemma \ref{shrinking} twice to the open cover $\{G_i: i\in I\}$ of the space $X$. Then we obtain two locally finite open covers of $X$, $\{V_i: i\in I\}$,  $\{W_i: i\in I\}$ verifying $$W_i\subset \overline{W}_i\subset V_i\subset \overline{V}_i\subset G_i,$$
for every $i\in I$. 

We apply now Urysohn's lemma and we obtain functions $\varphi_i:X\longrightarrow [0,1]$ continuous in $X$ and such that, for every $i\in I$, $\varphi_i=1$ in $\overline{W}_i$ and $\varphi_i=0$ in $V_i^c$. This completes the proof. 
\end{proof}



We need one more auxiliar result before we can prove that  $\mu_V^*$ is an outer measure. 

\begin{lema}\label{split}
Let $\{G_i: i\in I\}$ be a collection of open subsets of $S^{n-1}$. Let $f\in C(S^{n-1})^+$ verify $f\prec \bigcup_{i\in I}G_i$. Then, for every $i\in I$ there exists $f_i\in C(S^{n-1})^+$, with $f_i\prec G_i$, such that $\bigvee_{i\in I} f_i=f$. 
\end{lema}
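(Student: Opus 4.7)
The plan is to reduce to a finite subfamily using the compactness of $\supp(f)$, invoke Lemma \ref{partition} to obtain a supremum-type partition of unity, and then truncate $f$ against each piece. Set $K := \supp(f)$, a closed, hence compact, subset of $S^{n-1}$ contained in $\bigcup_{i \in I} G_i$. Extract a finite subcover $G_{i_1}, \ldots, G_{i_N}$ of $K$ and let $X' := G_{i_1} \cup \cdots \cup G_{i_N}$. Applied to the cover $\{G_{i_j}\}_{j=1}^N$ of $X'$, Lemma \ref{partition} supplies continuous functions $\varphi_j : X' \to [0,1]$ with $\varphi_j \prec G_{i_j}$ and $\bigvee_{j=1}^N \varphi_j = \uno$ on $X'$.

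Let $M := \|f\|_\infty$ and define, on $X'$, $h_j := f|_{X'} \wedge (M\varphi_j)$. Each $h_j$ is continuous on $X'$, satisfies $0 \leq h_j \leq f$, and vanishes whenever either $f$ or $\varphi_j$ does; in particular $\{h_j > 0\} \subset K \subset X'$. Extending $h_j$ by zero on $S^{n-1} \setminus X'$ yields a function $\tilde h_j$ that vanishes on $S^{n-1} \setminus K$ and agrees with $h_j$ on $X'$; it is continuous on $S^{n-1}$ by the pasting lemma applied to the open cover $\{X', S^{n-1} \setminus K\}$. From $\{h_j > 0\} \subset \{\varphi_j > 0\}$ together with $\supp(\tilde h_j) \subset K \subset X'$, one deduces $\supp(\tilde h_j) \subset \supp_{X'}(\varphi_j) \subset G_{i_j}$, so $\tilde h_j \prec G_{i_j}$.

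It remains to verify $\bigvee_{j=1}^N \tilde h_j = f$. On $K$, $\bigvee_j M\varphi_j = M \geq f$, hence $\bigvee_j (f \wedge M\varphi_j) = f \wedge M = f$; off $K$, both sides are zero. Setting $f_{i_j} := \tilde h_j$ for $j = 1, \ldots, N$ and $f_i := 0$ for all other $i \in I$ completes the decomposition.

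The main obstacle is converting the familiar additive partition of unity (which would give $f = \sum_j \psi_j f$) into a genuinely supremum-based splitting. The key device is to scale each $\varphi_j$ by $M = \|f\|_\infty$ before meeting with $f$: the enlarged supremum $\bigvee_j M\varphi_j$ then dominates $f$ pointwise on $K$, so the truncations $f \wedge M\varphi_j$ recover $f$ in the supremum. A secondary technical point, handled by the compact containment $K \subset X'$, is the continuous extension of each $h_j$ from $X'$ to all of $S^{n-1}$ via the pasting lemma.
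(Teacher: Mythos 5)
Your proof is correct and follows essentially the same route as the paper: both apply Lemma \ref{partition} to the cover, combine $f$ with each $\varphi_i$, extend by zero, and check continuity and supports using $\supp(f)$. The only real difference is the elementary operation used to split $f$ --- the paper takes $f_i=f\varphi_i$ and uses $\bigvee_i f\varphi_i=f\bigvee_i\varphi_i$, whereas you take $f_i=f\wedge M\varphi_i$ and use distributivity of $\wedge$ over $\vee$; both identities are valid, and your preliminary passage to a finite subcover is harmless but not needed, since the paper's argument handles the full index set directly.
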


\begin{proof}
Given $\{G_i: i\in I\}$ we construct $\{\varphi_i: i\in I\}$ as in Lemma \ref{partition}.  Now we define 

\begin{eqnarray*}
f_i(t) = \left \{ \begin{array}{cl}
      \displaystyle{ f(t)\varphi_i(t) 
      \quad}
        & \displaystyle{\quad \mbox{if } t\in \bigcup_{i\in I}G_i}\\
        \noalign{\bigskip}
 0 \quad & \quad \mbox{if } t\not \in \bigcup_{i\in I}G_i
 \end{array} \right.
\end{eqnarray*}

Clearly $f_i\prec G_i$ for every $i\in I$. For every $i\in I$, $f_i$ is continuous in $S^{n-1}$. To see this, note that $f_i$ is clearly continuous at $t$ if $t \in \bigcup_{i\in I}G_i$ or if $t\in \overline{\left(\bigcup_{i\in I}G_i\right)}^c$. Therefore, we only have to check continuity at the points $t$ in the boundary of $ \bigcup_{i\in I}G_i$. We fix one such $t$ and we consider a sequence $(t_k)_{k\in \mathbb N}\subset S^{n-1}$. We can divide this sequence into three subsequences: 
One in $\bigcup_{i\in I}G_i$, another one in $\overline{\left(\bigcup_{i\in I}G_i\right)}^c$ and the third in the boundary of $\bigcup_{i\in I}G_i$. It is clear that $f_i(t_j)$ converges to $f_i(t)=0$ along each of these subsequences. 

Finally, let  $t\in S^{n-1}$. If $t\in \bigcup_{i\in I}G_i$ then $\bigvee_{i\in I} f_i(t) = \bigvee_{i\in I} f(t)\varphi_i(t) =f(t) \bigvee_{i\in I} \varphi_i(t)=f(t)$. If $t\not \in \bigcup_{i\in I}G_i$ then $\bigvee_{i\in I} f_i(t)=0=f(t)$. 
\end{proof}







Now we can prove that  $\mu_V^*$ is an outer measure. 

\begin{prop}\label{medidaexterior}
Let $V:\mathcal S_0^n\longrightarrow \mathbb R^+$ be a radial continuous valuation verifying that $V(\{0\})=0$. Then $\mu_V^*$ defined as in Equation (\ref{defout}) is an outer measure. 
\end{prop}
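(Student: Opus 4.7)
The plan is to verify the three defining properties of an outer measure in turn, the first two being essentially immediate from the definition and the third being the substantive step.

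For (i), the only continuous function $f$ with $\supp(f)\subset\emptyset$ is $f\equiv 0$, whose associated radial body is $\{0\}$, so $\tilde V(0)=V(\{0\})=0$ by hypothesis; since $\emptyset$ is itself open this gives $\mu_V^*(\emptyset)=0$. For (ii), any open $G$ containing $B$ also contains $A\subset B$, so the infimum in (\ref{defout}) for $A$ is taken over a larger family than for $B$, giving $\mu_V^*(A)\le\mu_V^*(B)$.

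The real work is (iii). I would make the standard reduction: given sets $(A_i)_{i\in\mathbb N}$ and $\varepsilon>0$, choose open $G_i\supset A_i$ with $\mu_V^*(G_i)\le \mu_V^*(A_i)+\varepsilon/2^i$, and set $G=\bigcup_i G_i$, an open set containing $\bigcup_i A_i$. The problem then reduces to proving the following claim: whenever $G=\bigcup_{i\in\mathbb N} G_i$ is a countable union of open sets,
$$\mu_V^*(G)\le\sum_{i\in\mathbb N}\mu_V^*(G_i).$$
Given this, the standard $\varepsilon/2^i$ argument then yields countable subadditivity of $\mu_V^*$.

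To prove the claim, I would fix an arbitrary $f\prec G$ and control $\tilde V(f)$. Since $S^{n-1}$ is compact, $\supp(f)$ is a compact subset of $G=\bigcup_i G_i$, and hence is covered by finitely many pieces $G_1,\ldots,G_N$ (after reindexing). Applying Lemma \ref{split} to $f$ and the finite open cover $\{G_1,\ldots,G_N\}$ of $\supp(f)$ produces continuous $f_i\prec G_i$ with $f=\bigvee_{i=1}^N f_i$. At this point the crucial observation is that, because $V$ is positive, Equation (\ref{aditividad}) immediately gives the subadditivity $\tilde V(f_1\vee f_2)\le\tilde V(f_1)+\tilde V(f_2)$, and induction on $N$ (or, equivalently, dropping the nonnegative wedge terms in Lemma \ref{uniones}) yields $\tilde V\bigl(\bigvee_{i=1}^N f_i\bigr)\le\sum_{i=1}^N\tilde V(f_i)$. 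Since each $\tilde V(f_i)\le \mu_V^*(G_i)$ by definition, we obtain $\tilde V(f)\le\sum_{i\in\mathbb N}\mu_V^*(G_i)$, and taking the supremum over $f\prec G$ establishes the claim.

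The main obstacle — or rather, the conceptual point that has to be recognized — is that Lemma \ref{uniones} alone does not give a clean bound on $\tilde V\bigl(\bigvee f_i\bigr)$ because of the alternating signs; positivity of $V$ is exactly what is needed to discard the negative inclusion-exclusion terms and reduce the inequality to the simple wedge-subadditivity that plays the role of the triangle inequality $\nu(f+g)\le\nu(f)+\nu(g)$ in the linear Riesz-representation proof. Once that replacement is in hand, the argument is a direct transcription of the standard construction of an outer measure from a premeasure on opens.
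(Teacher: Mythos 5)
Your proof is correct and follows essentially the same route as the paper's: the same $\varepsilon/2^i$ reduction to open sets, compactness of $\supp(f)$ to pass to a finite subcover, Lemma \ref{split} to decompose $f$ as a supremum of $f_i\prec G_i$, and positivity of $V$ to turn the valuation identity (\ref{aditividad}) into wedge-subadditivity. No gaps.
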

\begin{proof}
Note first that $\mu^*_V(\emptyset)=\tilde{V}(0)=V(\{0\})=0$. The monotonicity of $\mu^*_V$ is immediate.

We prove next the countable subadditivity. Let $(A_i)_{i\in \mathbb N}$ be a sequence of subsets of $S^{n-1}$. If $\sum_{i \in \mathbb N} \mu_V^*(A_i)=\infty$, then there is nothing to prove. So we may assume that $\mu_V^*(A_i)<\infty$ for every $i \in \mathbb N$. Let $\epsilon>0$. For every $i\in \mathbb N$, choose an open set $G_i$ such that $A_i\subset G_i$ and $\mu^*_V(A_i)>\mu_V^*(G_i)-\frac{\epsilon}{2^i}$. Choose now $f\in C(S^{n-1})^+$ such that $f\prec(\bigcup_{i\in \mathbb N} G_i)$ and $$\mu^*_V(\bigcup_{i\in \mathbb N} G_i)\leq \tilde{V}(f)+ \epsilon.$$ Since $\supp(f)$ is compact, there  exists $N\in \mathbb N$ such that $\supp(f)\subset \bigcup_{i=1}^N G_i$.

We apply Lemma \ref{split} to the collection $\{G_i:\, 1\leq i \leq N\}$ and we obtain functions $f_i$ ($1\leq i \leq N$) as in the lemma. 

It follows from Equation (\ref{aditividad}) and the positivity of $V$ that  $$\tilde{V}(f_1\vee f_2)=\tilde{V}(f_1)+\tilde{V}(f_2)-\tilde{V}(f_1\wedge f_2)\leq \tilde{V}(f_1)+\tilde{V}(f_2).$$

Reasoning by induction, we easily get that 
$$\tilde{V}(f)= \tilde{V}\left(\bigvee_{i=1}^N f_i \right) \leq \sum_{i=1}^N \tilde{V}(f_i).$$

Now we have $$\mu^*_V\left(\bigcup_{i\in \mathbb N} A_i\right )\leq \mu^*_V\left(\bigcup_{i\in \mathbb N} G_i\right )\leq \tilde{V}(f)+\epsilon \leq \sum_{i=1}^N \tilde{V}(f_i) +\epsilon \leq $$ $$\leq \sum_{i=1}^N \mu_V^*(G_i)+\epsilon \leq \sum_{i\in \mathbb N} \mu_V^*(G_i)+\epsilon\leq \sum_{i\in \mathbb N} \mu_V^*(A_i)+ 2\epsilon.$$

Since $\epsilon>0$ was arbitrary, this finishes the proof.
\end{proof}






Given an outer measure $\mu^*$, we say that a set $B\subset S^{n-1}$ is $\mu^*$-measurable if for every $A\subset S^{n-1}$, $$\mu^*(A)=\mu^*(A\cap B) + \mu^*(A\cap B^c).$$

It is well known (\cite[Theorem 1.3.4]{Cohn}) that the set of $\mu^*$ measurable sets is a $\sigma$-algebra. Moreover, $\mu^*$ restricted to that $\sigma$-algebra is a measure. 

\begin{prop}\label{medidaca}
The Borel $\sigma$-algebra of  $S^{n-1}$, $\Sigma_n$, is $\mu^*_V$ measurable. Therefore, if we define $\mu_V$ as the restriction of $\mu_V^*$ to $\Sigma_n$, then $\mu_V$ is a measure. 
\end{prop}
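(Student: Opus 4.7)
My plan is to reduce the claim to showing that every open set $U \subset S^{n-1}$ is $\mu_V^*$-measurable. The Carath\'eodory result cited just before the statement guarantees that the $\mu_V^*$-measurable sets form a $\sigma$-algebra; since $\Sigma_n$ is generated by the open sets, this reduction is automatic.

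Fix an open set $U$ and an arbitrary $A \subset S^{n-1}$. The subadditivity established in Proposition \ref{medidaexterior} yields $\mu_V^*(A) \leq \mu_V^*(A \cap U) + \mu_V^*(A \cap U^c)$ for free, so only the reverse inequality requires work. Assume $\mu_V^*(A) < \infty$ (otherwise the claim is trivial) and fix $\epsilon > 0$. I would follow the standard Riesz-representation template: pick an open $G \supset A$ with $\mu_V^*(G) \leq \mu_V^*(A) + \epsilon$, then pick $f \prec G \cap U$ with $\tilde{V}(f) \geq \mu_V^*(G \cap U) - \epsilon$. The compactness of $\supp(f)$ is what makes the argument work: the set $W := G \setminus \supp(f)$ is open and contains $G \cap U^c$, so I can then pick $g \prec W$ with $\tilde{V}(g) \geq \mu_V^*(W) - \epsilon$.

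The crucial feature is that $\supp(g) \subset W$ and $\supp(f)$ are disjoint, which forces $f \wedge g \equiv 0$. The valuation identity (\ref{aditividad}), together with $\tilde{V}(0) = V(\{0\}) = 0$, then yields the clean equality $\tilde{V}(f \vee g) = \tilde{V}(f) + \tilde{V}(g)$. Since $f \vee g \prec G$, this chains to
\[
\mu_V^*(A) + \epsilon \geq \mu_V^*(G) \geq \tilde{V}(f \vee g) = \tilde{V}(f) + \tilde{V}(g) \geq \mu_V^*(A \cap U) + \mu_V^*(A \cap U^c) - 2\epsilon,
\]
and letting $\epsilon \to 0$ closes the proof.

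The main obstacle I anticipate is precisely the nonlinearity of $\tilde{V}$: identity (\ref{aditividad}) only converts $\vee$ into $+$ modulo the correction $\tilde{V}(f \wedge g)$, which is in general nonzero and not directly controllable. The trick that makes the Carath\'eodory argument go through nonlinearly is the disjoint-support construction above, which kills the wedge term. This is exactly where the hypotheses $V \geq 0$ and $V(\{0\}) = 0$ pay off: positivity keeps the one-sided estimates pointed in the right direction, while $V(\{0\}) = 0$ removes the otherwise-stubborn correction term.
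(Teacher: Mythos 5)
Your argument is correct and follows essentially the same route as the paper's own proof: approximate $A$ from outside by an open $G$, approximate $\mu_V^*(G\cap U)$ from inside by a function $f$ with compact support, replace the awkward set $G\cap U^c$ by the open set $W=G\setminus\supp(f)$ (the paper's $U\cap K^c$), approximate $\mu_V^*(W)$ from inside by $g$, and exploit the disjoint supports to kill the wedge term in (\ref{aditividad}). Apart from the swapped letters $U\leftrightarrow G$ and a slightly tighter bookkeeping of the $\epsilon$'s (the paper loses $3\epsilon$, you lose $2\epsilon$), this is the paper's proof.
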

\begin{proof}
We just need to check that every open set $G\subset S^{n-1}$ is $\mu^*_V$ measurable. It follows from the subadditivity of $\mu_V^*$ that it suffices to check that, for every $A\subset S^{n-1}$, 

$$\mu^*_V(A)\geq \mu^*_V(A\cap G) + \mu^*_V(A\cap G^c).$$

If $\mu_V^*(A)=\infty$, then there is nothing to prove. So we may assume that $\mu_V^*(A)<\infty$.
We fix $A\subset S^{n-1}$ and $\epsilon >0$. There exists an open set $U$, with $A\subset U$, such that $\mu^*_V(U)\leq \mu_V^*(A)+\epsilon$. 

$U\cap G$ is an open set. We choose $f_1\prec (U\cap G)$ such that $\mu^*_V(U\cap G) \leq \tilde{V}(f_1)+ \epsilon$. We consider the compact set $K=\supp(f_1)\subset (U\cap G)$. Then $(U\cap G^c)\subset (U\cap K^c)$, and this last set is open. Choose now $f_2\prec (U\cap K^c)$ such that $\mu^*_V(U\cap K^c)\leq \tilde{V}(f_2)+\epsilon$.

Note that $f_1$ and $f_2$ have disjoint supports, both of them contained in $U$. Therefore $f_1\vee f_2 \prec U$,  $f_1\wedge f_2=0$, $\tilde{V}(f_1\vee f_2)=\tilde{V}(f_1)+ \tilde{V}(f_2)$ and we have

$$\mu^*_V(A)\geq \mu^*_V(U)-\epsilon \geq \tilde{V}(f_1\vee f_2)-\epsilon=  \tilde{V}(f_1)+ \tilde{V}(f_2)-\epsilon\geq $$ $$\geq \mu^*_V(U\cap G) + \mu^*_V(U\cap K^c)-3\epsilon \geq  \mu^*_V(U\cap G) + \mu^*_V(U\cap G^c)-3\epsilon \geq$$ $$\geq \mu^*_V(A\cap G) + \mu^*_V(A\cap G^c)-3\epsilon. $$
\end{proof}







So, we have seen that given a positive valuation $V$ on $\mathcal S_0^n$ we can associate to it in a natural way a measure $\mu_V: \Sigma_n\longrightarrow \mathbb [0,\infty]$.

It is immediate to see that if  $V$ is rotationally invariant,  so is $\mu_V$.

Let us see that $\mu_V$ is finite. Suppose that $\mu_V(S^{n-1})=\infty$. Let $G\subset S^{n-1}$ be any fixed nonempty open set, and choose $t_0\in G$. For every $t\in S^{n-1}$, let $\varphi_t$  be a rotation in $S^{n-1}$ such that $\varphi_t(t_0)=t$. Let $G_t$ be the open set $\varphi_t(G)$. Then $\cup_{t\in S^{n-1}}G_t$ is an open cover of the compact set $S^{n-1}$. Pick a finite subcover $G_{t_1},\ldots, G_{t_k}$. It follows from subadditivity together with rotational invariance that $$\infty=\mu_V(S^{n-1})\leq \sum_{i=1}^k \mu_V(G_{t_i})=k \mu_V(G),$$
and, therefore, for every nonempty open set $G\subset S^{n-1}$, $\mu_V(G)=\infty$. 

Let now $(H_i)_{i\in \mathbb N}$ be a sequence of nonempty disjoint open subsets of $S^{n-1}$. It follows from the previous paragraph that, for each $i\in \mathbb N$, we can consider $f_i\in C(S^{n-1})^+$ with  $f_i\prec H_i$ and  such that $\tilde{V}(f_i)\geq 1$. Note that, if $i\not = j$, $f_i\wedge f_j=0$. Let $f=\bigvee_{i\in \mathbb N} f_i$. It follows from the  valuation property that $V(f)$ is not in $\mathbb R^+=[0, +\infty)$, a contradiction.

Therefore $\mu_V$ is finite and rotational invariant and, hence, proportional to the Lebesgue measure. That is, there exists a positive $\vartheta$ such that $\mu_V=\vartheta m$, where $m$ is the Lebesgue measure in $S^{n-1}$. 

Similarly, given $\lambda>0$, we can repeat the procedure above and define an outer measure $\mu^*_{V,\lambda}$ on open sets by the formula  
$$\mu_{V,\lambda}^*=\sup \{\tilde{V}(f): \, \frac{f}{\lambda}\prec G\}.$$

Then, we can extend it to general sets $A$ and define a measure $\mu_{V,\lambda}$ as we did for $\mu_{V}$. As in the case of $\mu_V$, all of the $\mu_{V,\lambda}$ are rotationally invariant. Hence, for every $\lambda>0$ there exists $\vartheta_\lambda\geq 0$ such that 
\begin{equation}\label{vartheta}
\mu_{V,\lambda}=\vartheta_\lambda m.
\end{equation} 
With this notation,  $\mu_{V}=\mu_{V,1}$ and $\vartheta=\vartheta_1$.

If $V$ is increasing in the sense that for every continuous $f\in C(S^{n-1})^+$ and for every $\lambda \geq 1$ one has $\tilde{V}(f)\leq \tilde{V}(\lambda f)$, then 
$\mu_V$ can be used to obtain an integral representation of $V$. But $V$ need not be increasing, and we will require more involved reasonings. 

To make clear why $\mu_{V}$ does not properly capture the values of $V$, consider the function $\theta:\mathbb R^+ \longrightarrow \mathbb R^+$ defined by $\theta(\lambda)=\tilde{V}(\lambda \uno)$. Suppose, for instance, that $V$ is such that $$\max_{\lambda\in [0,1]}\{\theta(\lambda)\}=\theta\left(\frac{1}{2}\right)$$ and that $\theta$ is strictly decreasing in $(\frac{1}{2}, 1]$. We will see in the next sections (and it follows as a consequence of Theorem \ref{main}) that, in that case,  for an open set $G$, $\mu_V(G)$ can be arbitrarily well approximated by $\tilde{V}(f)$, where $f$ are functions with $\|f\|_\infty\leq \frac{1}{2}$. It will follow that $\mu_{V,\frac{1}{2}}=\mu_{V,1}$, but $\tilde{V}(\frac{1}{2}\uno)> \tilde{V}(\uno)$. That is, the measures $\mu_{V,\lambda}$ do not suffice to characterize $V$. 

But the construction of the measures $\mu_{V,\lambda}$ does yield the next observation, which will be used several times in the next section. 

We say that $V\ll m$ if,  for every $\lambda>0$ and  $\epsilon>0$, there exists  $\delta >0$ such that for every open set $G$, with $m(G)<\delta$, and for every $f\in C(S^{n-1})^+$, with $\|f\|_\infty\leq\lambda$ and $\supp(f)\subset G$, one has $\tilde{V}(f)<\epsilon$. 

\begin{obs}\label{Vpequenha} If $V$ is as in the hypothesis of Theorem \ref{main}, then $V\ll m$. With more detail, let $\lambda>0$, $\epsilon>0$,  and let  $G\subset S^{n-1}$ be an open set such that $m(G)\leq \epsilon$. Then, for every $f\in C(S^{n-1})^+$ such that $\supp(f)\subset G$ and $\|f\|_\infty\leq \lambda$, 
$$ \tilde{V}(f)\leq \vartheta_\lambda \epsilon,$$
where $\vartheta_\lambda$ is given by Equation (\ref{vartheta}).
\end{obs}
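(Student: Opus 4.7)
The plan is to read the statement as essentially a tautological consequence of how the family $\mu_{V,\lambda}^*$ was constructed, together with the identification $\mu_{V,\lambda}=\vartheta_\lambda m$ from Equation~(\ref{vartheta}). I would establish the quantitative inequality first and then derive $V\ll m$ from it.

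First, fix $\lambda>0$, an open set $G\subset S^{n-1}$, and $f\in C(S^{n-1})^+$ with $\supp(f)\subset G$ and $\|f\|_\infty\leq \lambda$. The function $f/\lambda$ then takes values in $[0,1]$ and has $\supp(f/\lambda)\subset G$, so $f/\lambda\prec G$ in the sense defined in Section~\ref{sectionnotation}. By the very definition of $\mu_{V,\lambda}^*$ on open sets, this gives
\[
\tilde V(f)\leq \mu_{V,\lambda}^*(G).
\]
Second, since $G$ is open, the same argument used for $\mu_V^*$ (i.e.\ $\mu_1^*(G)=\mu_V^*(G)$ by monotonicity on open sets) yields $\mu_{V,\lambda}^*(G)=\mu_{V,\lambda}(G)$. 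Invoking Equation~(\ref{vartheta}), we then conclude
\[
\tilde V(f)\leq \mu_{V,\lambda}(G)=\vartheta_\lambda\, m(G)\leq \vartheta_\lambda\,\epsilon,
\]
which is precisely the second, more detailed assertion of the observation.

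Finally, to obtain $V\ll m$ in the form stated at the beginning of the observation, I would simply take, for given $\lambda,\epsilon>0$, the tolerance $\delta=\epsilon/(\vartheta_\lambda+1)$ (the $+1$ handles the degenerate case $\vartheta_\lambda=0$). Any open $G$ with $m(G)<\delta$ and any admissible $f$ with $\supp(f)\subset G$, $\|f\|_\infty\leq \lambda$, then satisfy $\tilde V(f)\leq \vartheta_\lambda m(G)<\epsilon$ by the inequality just established.

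There is no real obstacle: the heavy lifting — that $\mu_{V,\lambda}^*$ is a genuine outer measure, that the Borel sets are $\mu_{V,\lambda}^*$-measurable, and that $\mu_{V,\lambda}$ is a finite rotation invariant measure, hence a multiple of $m$ — has already been done in Propositions~\ref{medidaexterior} and~\ref{medidaca} together with the argument that $\mu_V(S^{n-1})<\infty$, which transfers verbatim to each $\mu_{V,\lambda}$. The only point that deserves a line of justification is the equality $\mu_{V,\lambda}^*(G)=\mu_{V,\lambda}(G)$ for open $G$, which is needed so that the bound involves $m(G)$ rather than some outer-measure quantity.
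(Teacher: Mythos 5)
Your proof is correct and takes exactly the route the paper intends: since $\|f\|_\infty\leq\lambda$ and $f\geq 0$, the function $f/\lambda$ satisfies $f/\lambda\prec G$, so $\tilde V(f)\leq \mu^*_{V,\lambda}(G)=\mu_{V,\lambda}(G)=\vartheta_\lambda\,m(G)\leq\vartheta_\lambda\epsilon$. The paper leaves this unproved as an immediate consequence of the construction of $\mu_{V,\lambda}$, and your write-up supplies precisely that unfolding, including the correct handling of the degenerate case $\vartheta_\lambda=0$ when extracting the $V\ll m$ statement.
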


\section{Construction of the second measure}\label{segundamedida}

In this section we define the measures that will allow us to obtain the integral representation of Theorem \ref{main}. In the previous section we defined a measure ``from above'', starting with an outer measure. Now we will proceed ``from below'', starting with a {\em content}.

We recall that a {\em content} in $S^{n-1}$ is a non negative, finite, monotone set function defined on the class of the compact subsets of $S^{n-1}$ which is subadditive and additive on disjoint sets \cite[\S 53]{Halmos}. For every $\lambda>0$, we will define a {\em content} based on $V$. As we did in the previous section, for simplicity in the notation we make the construction first for $\lambda=1$. The generalization will again be obvious.   

Given a compact set $K\subset S^{n-1}$, we define 

\begin{equation}\label{defcontent}
\zeta(K)=\inf\{\tilde{V}(f):  K\prec f\}
\end{equation} 

We want to see that $\zeta$ is a content. First we need a lemma. 

\begin{lema}\label{lambdaG}
Let $K\subset S^{n-1}$ be a compact set and let $G\supset K$ be an open set. Then $\zeta(K)=\inf\{\tilde{V}(f): K\prec f \prec G\}:=\zeta_G(K)$
\end{lema}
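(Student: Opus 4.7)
My plan is to prove the two inequalities separately. The bound $\zeta_G(K) \geq \zeta(K)$ is immediate, since every $f$ with $K \prec f \prec G$ also satisfies $K \prec f$, so the infimum on the left is taken over a subset of the functions defining the infimum on the right. The real content of the lemma is $\zeta_G(K) \leq \zeta(K)$, and my strategy is: given any admissible $f$ for $\zeta(K)$ and any $\epsilon > 0$, construct $f_1$ with $K \prec f_1 \prec G$ such that $\tilde V(f_1) \leq \tilde V(f) + \epsilon$; taking the infimum over $f$ and letting $\epsilon \to 0$ then yields the desired inequality.

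The construction amounts to splitting $f$ between $G$ and an open neighbourhood of $G^c$. Using the inner regularity of Lebesgue measure on $S^{n-1}$ together with normality, I would first produce an open set $U$ with $K \subset U \subset \overline{U} \subset G$ and with $m(G \setminus \overline{U})$ smaller than the $\delta$ furnished by Observation \ref{Vpequenha} for $\lambda = 1$ and error $\epsilon$ (pick a compact $C \subset G$ approximating $G$ in measure and take a small open neighbourhood of $K \cup C$ inside $G$). Since $\{G, \overline{U}^c\}$ is an open cover of $S^{n-1}$, Lemma \ref{split} yields continuous functions $f_1, f_2$ with $f_1 \prec G$, $f_2 \prec \overline{U}^c$, and $f_1 \vee f_2 = f$. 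The crucial verification is that $K \prec f_1$: inspecting the proof of Lemma \ref{split}, the partition of unity satisfies $\varphi_2 \prec \overline{U}^c$, so $\varphi_2 \equiv 0$ on $\overline{U} \supset K$; since $\varphi_1 \vee \varphi_2 = \uno$, this forces $\varphi_1 \equiv 1$ on $K$, and hence $f_1 = f \varphi_1 \equiv 1$ on $K$. Thus $f_1$ is admissible for $\zeta_G(K)$.

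To conclude, I would apply the valuation identity \eqref{aditividad} to $f_1$ and $f_2$:
$$\tilde V(f_1) \;=\; \tilde V(f_1 \vee f_2) + \tilde V(f_1 \wedge f_2) - \tilde V(f_2) \;\leq\; \tilde V(f) + \tilde V(f_1 \wedge f_2),$$
where positivity of $V$ lets us drop the $\tilde V(f_2)$ term. The error $\tilde V(f_1 \wedge f_2)$ is then controlled by Observation \ref{Vpequenha}: one has $\supp(f_1 \wedge f_2) \subset \supp f_1 \cap \supp f_2 \subset G \cap \overline{U}^c = G \setminus \overline{U}$, an open set of measure less than $\delta$, while $\|f_1 \wedge f_2\|_\infty \leq 1$, so $\tilde V(f_1 \wedge f_2) \leq \epsilon$. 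This gives $\zeta_G(K) \leq \tilde V(f_1) \leq \tilde V(f) + \epsilon$, and the lemma follows. The main subtle point is engineering the split so that $K \prec f_1$, which forces the specific cover $\{G, \overline{U}^c\}$ with $\overline{U} \supset K$; positivity together with Observation \ref{Vpequenha} then plays the role monotonicity would play in a classical Riesz-type argument, absorbing the error coming from the overlap region $G \setminus \overline{U}$.
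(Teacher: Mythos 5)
Your proof is correct and follows essentially the same strategy as the paper's: split $f$ via a two-element open cover and the partition of unity of Lemma \ref{partition} (equivalently Lemma \ref{split}), use the valuation identity plus positivity to bound $\tilde V(f_1)$ by $\tilde V(f)$ plus the valuation of the overlap piece, and kill the overlap with Observation \ref{Vpequenha}. The only genuine difference is the choice of the second set in the cover: the paper takes a thin open neighbourhood $H$ of $\supp(f)\setminus G$ with $m(H\setminus C)$ small, whereas you take the complement of $\overline{U}$ for a large open $U$ squeezed between $K$ and $G$. Your variant has the small advantage that $K\prec f_1$ is automatic (since $\varphi_2$ vanishes on $\overline{U}\supset K$, forcing $\varphi_1\equiv 1$ there), a point which in the paper's version additionally requires choosing $H$ disjoint from $K$ — possible since $K$ and $\supp(f)\setminus G$ are disjoint compacta, but left implicit there.
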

\begin{proof}
One of the inequalities is trivial. We only need to check that $\zeta(K)\geq \zeta_G(K)$. To see this, we choose $\epsilon>0$. We pick now $f\in C(S^{n-1})^+$ with $K\prec f$ such that $\zeta(K)\geq \tilde{V}(f)-\epsilon$. The set  $C=\supp(f)\setminus G$ is closed (it could be empty, in that case the next reasonings are trivial). Therefore $C$ is compact, and $K\cap C=\emptyset$. Using the regularity of the Lebesgue measure, we pick an open set $H\supset C$ such that $m(H\setminus C)\leq \frac{\epsilon}{\vartheta}$. Therefore, $m(G\cap H)\leq m(H\setminus C)\leq \frac{\epsilon}{\vartheta}$. We apply now Lemma \ref{partition} to the open sets $G,H$ and we obtain the functions $\varphi_G, \varphi_H$. We define $f_G=f\varphi_G$ and $f_H=f\varphi_H$. We have that $f=f_G\vee f_H$ and  $\supp(f_G\wedge f_H)\subset G\cap H$. Therefore, Observation \ref{Vpequenha} tells us that  $\tilde{V}(f_G\wedge f_H)- \epsilon \leq 0$. So, we have $$\zeta(K)\geq \tilde{V}(f)-\epsilon =\tilde{V}(f_G\vee f_H)-\epsilon\geq \tilde{V}(f_G\vee f_H) -\epsilon+ \tilde{V}(f_G\wedge f_H)-\epsilon=$$ $$=\tilde{V}(f_G)+\tilde{V}(f_H) -2\epsilon \geq \tilde{V}(f_G)-2\epsilon \geq \zeta_G(K)-2\epsilon,$$
and our result follows. 
\end{proof}

\begin{lema}
$\zeta$ is a content.
\end{lema}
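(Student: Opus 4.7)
The plan is to verify each of the five defining properties of a content in turn. Non-negativity of $\zeta$ is immediate from the positivity of $\tilde V$, and finiteness follows by using $\uno$ as a test function: since $K\prec \uno$ for every compact $K\subset S^{n-1}$, one has $\zeta(K)\le \tilde V(\uno)<\infty$. Monotonicity is also immediate, because a larger compact set imposes a stronger restriction on the admissible test functions, so the infimum is taken over a smaller collection.

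For subadditivity, given $\epsilon>0$ I would choose $f_i\in C(S^{n-1})^+$ with $K_i\prec f_i$ and $\tilde V(f_i)\le \zeta(K_i)+\epsilon$ for $i=1,2$. Then $K_1\cup K_2\prec f_1\vee f_2$, and Equation (\ref{aditividad}) combined with positivity of $\tilde V$ gives $\tilde V(f_1\vee f_2)\le \tilde V(f_1)+\tilde V(f_2)\le \zeta(K_1)+\zeta(K_2)+2\epsilon$. Letting $\epsilon\to 0$ yields $\zeta(K_1\cup K_2)\le \zeta(K_1)+\zeta(K_2)$.

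The substantive work is additivity on disjoint pairs, for which I must prove the reverse inequality $\zeta(K_1\cup K_2)\ge \zeta(K_1)+\zeta(K_2)$ when $K_1\cap K_2=\emptyset$. Since $S^{n-1}$ is normal, I can separate $K_1,K_2$ by disjoint open sets $G_1\supset K_1$, $G_2\supset K_2$. Applying Lemma \ref{lambdaG} to $K_1\cup K_2\subset G_1\cup G_2$, for any $\epsilon>0$ I can pick $f\in C(S^{n-1})^+$ with $K_1\cup K_2\prec f\prec G_1\cup G_2$ and $\tilde V(f)\le \zeta(K_1\cup K_2)+\epsilon$. I would then split $f$ by setting $f_i:=f\cdot\chi_{G_i}$. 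The main technical step, and the principal obstacle, is showing that each $f_i$ is continuous on $S^{n-1}$ with $\supp(f_i)\subset G_i$. The support inclusion rests on the observation that two disjoint open sets satisfy $\overline{G_1}\cap G_2=\emptyset$, so $\supp(f)\cap\overline{G_1}\subset (G_1\cup G_2)\cap\overline{G_1}=G_1$, which forces $\supp(f_1)\subset G_1$; continuity at a boundary point $t\in\partial G_1$ follows because $t\notin G_1\cup G_2\supset \supp(f)$, so $f$ (and therefore $f_1$) vanishes in an entire neighborhood of $t$.

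Once the continuous splitting is in place, the disjointness $\supp(f_1)\cap \supp(f_2)=\emptyset$ forces $f_1\wedge f_2=0$, and the hypothesis $V(\{0\})=0$ of Theorem \ref{main} gives $\tilde V(f_1\wedge f_2)=0$. Equation (\ref{aditividad}) then yields $\tilde V(f)=\tilde V(f_1\vee f_2)=\tilde V(f_1)+\tilde V(f_2)$, and since $K_i\prec f_i$ this is $\ge \zeta(K_1)+\zeta(K_2)$. Combining with the $\epsilon$-optimality of $f$ and letting $\epsilon\to 0$ closes the argument.
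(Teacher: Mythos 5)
Your proof is correct and follows essentially the same route as the paper: the same infimum manipulations for subadditivity, and for additivity on disjoint compacta the same device of separating by disjoint open sets, invoking Lemma \ref{lambdaG}, and splitting $f$ into its restrictions to $G_1$ and $G_2$. The only difference is that you spell out the continuity and support verification for $f_i=f\chi_{G_i}$, which the paper simply asserts as clear.
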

\begin{proof}

$\zeta$ is clearly non-negative and monotone. To see that $\zeta$ is finite, note first   $V(B_{\mathbb R^n})=\tilde{V}( \uno)<\infty$. Therefore, for every closed set $C\subset S^{n-1}$, $\zeta(C)\leq \tilde{V}(\uno)<\infty$. 

Let us see that it is subadditive. Let $K_1, K_2$ be compact. For $i=1,2$ let $f_i\in C(S^{n-1})^+$ be such that $K_i\prec f_i$. Then $(K_1\cup K_2)\prec (f_1\vee f_2) $ and $$\zeta(K_1\cup K_2)\leq \tilde{V}(f_1\vee f_2) \leq \tilde{V}(f_1\vee f_2)+ \tilde{V}(f_1\wedge f_2)=\tilde{V}(f_1)+ \tilde{V}(f_2). $$

It follows that $$\zeta(K_1\cup K_2)\leq \inf_{K_1\prec f_1} \tilde{V}(f_1)+ \inf_{K_2\prec f_2}\tilde{V}(f_2)= \zeta(K_1)+  \zeta(K_2).$$

We have to see now that if $K_1, K_2$ are disjoint compact sets, then $$\zeta (K_1\cup K_2)\geq \zeta(K_1) + \zeta(K_2).$$   

To see this, we first fix $\epsilon >0$. We choose two disjoint open sets $G_1,G_2$ containing $K_1, K_2$ respectively. $K_1\cup K_2$ is contained in the open set $G_1\cup G_2$. Therefore we can apply Lemma \ref{lambdaG} and we obtain $f\in C(S^{n-1})^+$ such that $ K_1\cup K_2\prec f\prec G_1\cup G_2$ and $\zeta(K_1\cup K_2)\geq \tilde{V}(f)-\epsilon$. We define $f_1, f_2$ as the restrictions of $f$ to $G_1, G_2$ respectively. Clearly $f_1$ and $f_2$ are continuous, $K_1\prec f_1$,  $K_2\prec f_2$ and $f_1\wedge f_2=0$. Therefore
$$\zeta(K_1\cup K_2)\geq \tilde{V}(f)-\epsilon=\tilde{V}(f_1)+\tilde{V}(f_2)-\epsilon \geq \zeta(K_1)+ \zeta(K_2)-\epsilon.$$



\end{proof}

Once we have a content, we can construct a regular measure associated to it in a standard manner (see \cite[\S 53]{Halmos}): We define first an inner content on open sets $G$ by $\mu_*(G)=\sup \{\zeta(K): K\subset G\}$. Next we define an outer measure on all the subsets of $S^{n-1}$ $$\nu^*(A)=\inf\{\mu_*(G): G\supset A\}$$ and finally we consider the measure $\nu$ defined as the restriction of $\nu^*$ to the $\nu^*$-measurable sets. $\nu$ is a regular  measure on $\Sigma_n$, the Borel sets of $S^{n-1}$.

In general $\nu$ is not an extension of the content. But if $\zeta$ is {\em regular} then we can guarantee that $\nu$ is an extension of $\zeta$, that is $\nu(K)=\zeta(K)$ for every compact set $K\subset S^{n-1}$ (\cite[\S 54]{Halmos}).

We recall that a content $\zeta$ is regular if, for every compact $K$,

\begin{equation}\label{defregular}
\zeta(K)=\inf\{ \zeta(D): K\subset D^{\circ}; D \mbox{ compact }\},\end{equation}
where $A^\circ$ denotes the interior of a set $A$.

We see next that the content $\zeta$ defined in Equation (\ref{defcontent}) is regular. 

\begin{prop}\label{regularidad}
$\zeta$ is a regular content. 
\end{prop}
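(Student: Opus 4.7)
The inequality $\zeta(K) \leq \inf\{\zeta(D): K \subset D^\circ,\ D \text{ compact}\}$ is immediate from monotonicity of $\zeta$. I would prove the reverse by fixing $\epsilon > 0$ and producing, explicitly, a compact neighborhood $D$ of $K$ with $\zeta(D) \leq \zeta(K) + \epsilon$.

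First, by the definition of $\zeta(K)$ choose $f \in C(S^{n-1})^+$ with $K \prec f$ (so in particular $f \leq 1$) and $\tilde{V}(f) \leq \zeta(K) + \epsilon/2$. The idea is to fatten $K$ using a superlevel set of $f$: for $0 < \alpha < 1$, set
$$D_\alpha := \{t \in S^{n-1} : f(t) \geq 1 - \alpha\}.$$
This is closed, hence compact, and since $f \equiv 1$ on $K$ we have $K \subset \{f > 1-\alpha\} \subset D_\alpha^\circ$, so $D_\alpha$ is a genuine compact neighborhood of $K$.

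Next I would produce a cheap test function for $\zeta(D_\alpha)$. The natural choice is
$$g_\alpha := \min\!\left(\frac{f}{1-\alpha},\, 1\right),$$
which is continuous, $[0,1]$-valued, and identically $1$ on $D_\alpha$, so $D_\alpha \prec g_\alpha$ and hence $\zeta(D_\alpha) \leq \tilde{V}(g_\alpha)$. A short computation splitting according to whether $f(t) \geq 1-\alpha$ or $f(t) < 1-\alpha$ shows $\|g_\alpha - f\|_\infty \leq \alpha$. Since the radial continuity of $V$ is precisely sup-norm continuity of $\tilde{V}$ on $C(S^{n-1})^+$, taking $\alpha$ small enough yields $\tilde{V}(g_\alpha) \leq \tilde{V}(f) + \epsilon/2 \leq \zeta(K) + \epsilon$, which completes the argument.

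The main obstacle is that $\tilde{V}$ is not assumed monotone, so one cannot simply dominate $\zeta(D)$ by the $\tilde{V}$-value of any function larger than $f$. What rescues the argument is the constraint $f \leq 1$ built into the notation $K \prec f$: it forces $g_\alpha$ to be uniformly $\alpha$-close to $f$, rather than related to it by a rescaling whose effect on $\tilde{V}$ one cannot control, and thus lets the continuity of $\tilde{V}$ do the real work. If one tried to avoid using $f \leq 1$ the natural substitute $\min(f,1)$ would have to be compared with $f$ via the valuation identity $\tilde{V}(f \wedge \uno) + \tilde{V}(f \vee \uno) = \tilde{V}(f) + \tilde{V}(\uno)$, and without monotonicity that comparison is exactly what cannot be bounded.
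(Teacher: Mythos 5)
Your proof is correct and is essentially the paper's own argument: the paper also starts from a near-optimal $f$ with $K\prec f$, replaces it with the clipped function $\uno\wedge(1+\delta)f$ (which is just your $g_\alpha$ under the reparametrization $1+\delta = 1/(1-\alpha)$), and takes the superlevel set $f^{-1}([1-\delta/2,1])$ as the compact neighborhood, invoking sup-norm continuity of $\tilde V$ at $f$ to close the estimate. Your closing remark about the $[0,1]$-valued convention in $K\prec f$ being what makes the clipping a small sup-norm perturbation, rather than an uncontrollable rescaling, is exactly the right reason the argument survives the lack of monotonicity.
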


\begin{proof}
We have to show that Equation (\ref{defregular}) holds. One of the inequalities follows immediately from the monotonicity of $\zeta$. 

For the other inequality, fix a compact set $K$ and $\epsilon>0$. Choose  $f\in C(S^{n-1})^+$ such that $K\prec f$ and $\tilde{V}(f)\leq \zeta(K)+\epsilon$. 

Using the fact that $\tilde{V}$ is continuous at $f$, we get the existence of $0<\delta<1$ such that $|\tilde{V}(f)-\tilde{V}(g)|\leq \epsilon$ whenever $\|f-g\|_{\infty}\leq \delta$. We consider the function $g= \uno \wedge 
 (1+\delta) f(t).$ Then $g$ is continuous, $\|g-f\|_\infty\leq \delta$,  and $E\prec g$, where $E=f^{-1}([1-\frac{\delta}{2}, 1])$. Note that $E$ is compact and that 
$$K \subset f^{-1}\left(\{1\}\right) \subset f^{-1}\left( (1-\frac{\delta}{4}, 1]\right) \subset \left(f^{-1}\left([1-\frac{\delta}{2}, 1]\right)\right)^\circ=E^\circ.$$

Therefore $$\inf\{ \zeta(D): K\subset D^{\circ}; D \mbox{ compact }\}\leq  \zeta(E) \leq \tilde{V}(g) \leq \tilde{V}(f)+\epsilon \leq \zeta(K)+ 2\epsilon,$$
and our result follows. 
\end{proof}

It follows now from \cite[\S 54 Theorem A]{Halmos} that $\nu(K)=\zeta(K)$ for every compact set $K$.

Again, it follows from the fact that $V$ is rotationally invariant that $\zeta$, and therefore also $\nu$, are rotationally invariant. Therefore, we know that $\nu$ is proportional to $m$, the Lebesgue measure on $S^{n-1}$.  

In general, for every strictly positive  real number $\lambda>0$ we can define a content $\zeta_\lambda$ by \begin{equation}\label{contenidos}\zeta_\lambda(K)=\inf\{\tilde{V}(f): \mbox{ where } f\in C(S^{n-1})^+, \, \, K\prec \frac{f}{\lambda} \}. \end{equation}

So, our previous $\zeta$ becomes $\zeta_1$. For every $\lambda>0$, $\zeta_\lambda$ is a regular content with associated measure $\nu_\lambda$.

\section{Proof of the main result}\label{demodemain}

In this section we use the previous constructions to prove Theorem \ref{main}. 

For clarity in the exposition, we isolate in the next lemma a technical aspect of the proof.

\begin{lema}\label{pequenho}
Let $K\subset S^{n-1}$ be a compact set, let $\lambda>0$, let $\epsilon>0$ and let $\vartheta_\lambda$ be as in Observation \ref{Vpequenha}. Then, for every open set $G\supset K$  such that $m(G\setminus K)\leq \frac{\epsilon}{4\vartheta_\lambda}$ and for every  $f\in C(S^{n-1})^+$ such that $K\prec \frac{f}{\lambda}\prec G$,  $$\tilde{V}(f)\leq \nu_\lambda(K)+\epsilon.$$ 
\end{lema}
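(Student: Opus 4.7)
The strategy is to compare $\tilde V(f)$ with $\tilde V(h)$ for a near-minimizer $h$ of $\zeta_\lambda(K)$, using the modular valuation identity together with a partition-of-unity decomposition in the spirit of Lemma \ref{lambdaG}. First, the argument proving Lemma \ref{lambdaG} adapts verbatim (after the substitution $f\mapsto f/\lambda$) to the content $\zeta_\lambda$, yielding $\zeta_\lambda(K)=\inf\{\tilde V(g):K\prec g/\lambda\prec G\}$. I therefore fix $h\in C(S^{n-1})^+$ with $K\prec h/\lambda\prec G$ and $\tilde V(h)\leq \nu_\lambda(K)+\epsilon/4$. The modular identity reads
\[
\tilde V(f)=\tilde V(f\vee h)+\tilde V(f\wedge h)-\tilde V(h),
\]
so it will suffice to establish the upper bounds $\tilde V(f\vee h)\leq \tilde V(h)+3\epsilon/8$ and $\tilde V(f\wedge h)\leq \tilde V(h)+3\epsilon/8$; substituting these into the identity gives $\tilde V(f)\leq \tilde V(h)+3\epsilon/4\leq \nu_\lambda(K)+\epsilon$.

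For the first inequality, let $A=\{t:f(t)>h(t)\}$, an open set contained in $G\setminus K$ (since $f=h=\lambda$ on $K$). After replacing $A$ by a generic level set $\{f>h+\eta\}$ for a small $\eta>0$, I may assume $\overline A\cap K=\emptyset$ and $m(\overline A)=m(A)\leq m(G\setminus K)\leq \epsilon/(4\vartheta_\lambda)$. By outer regularity of the Lebesgue measure I pick an open $H\supset\overline A$ with $m(H)\leq m(A)+\delta$ for an auxiliary small $\delta>0$. Then $\{H,\,S^{n-1}\setminus\overline A\}$ is an open cover of $S^{n-1}$, and by Lemma \ref{partition}, arranged (as in the proof of Lemma \ref{lambdaG}) so that the partition function $\varphi_O$ associated to $S^{n-1}\setminus\overline A$ has support in that set, I obtain continuous $\varphi_H,\varphi_O$ with $\varphi_H\vee\varphi_O=\uno$. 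The crucial point is that $f\leq h$ on $S^{n-1}\setminus\overline A$, so $(f\vee h)\varphi_O=h\varphi_O$ pointwise (both vanish on $\overline A$, where $\varphi_O=0$, and on $\overline A^{\,c}$ both equal $h\varphi_O$). Expanding $\tilde V(f\vee h)$ and $\tilde V(h)$ by the modular identity applied to the decompositions $f\vee h=(f\vee h)\varphi_H\vee(f\vee h)\varphi_O$ and $h=h\varphi_H\vee h\varphi_O$, the ``$O$''-terms cancel exactly; the ``$H$''-terms are each bounded by $\vartheta_\lambda m(H)$ via Observation \ref{Vpequenha}, and the correction terms have support in $H\cap(S^{n-1}\setminus\overline A)$, of measure at most $\delta$, contributing at most $\vartheta_\lambda\delta$ each. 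Choosing $\delta\leq \epsilon/(16\vartheta_\lambda)$ yields $\tilde V(f\vee h)-\tilde V(h)\leq \vartheta_\lambda m(\overline A)+2\vartheta_\lambda\delta\leq \epsilon/4+\epsilon/8=3\epsilon/8$. The same argument with $B=\{f<h\}$ in place of $A$, and using that $f\wedge h=h$ on $S^{n-1}\setminus\overline B$, gives $\tilde V(f\wedge h)-\tilde V(h)\leq 3\epsilon/8$.

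The main technical obstacle is arranging the partition of unity so that the ``off-$A$'' pieces of $f\vee h$ and $h$ are literally identical rather than merely close: this requires $H$ to contain the full closure $\overline A$ and the companion partition function $\varphi_O$ to vanish on $\overline A$, exactly as in the careful set-up of the proof of Lemma \ref{lambdaG}. The measure-theoretic subtlety that $m(\overline A)$ might exceed $m(A)$ when $\partial A$ has positive Lebesgue measure is resolved by passing from $A$ to a level set $\{f>h+\eta\}$ at a generic height $\eta$, for which $m(\{f=h+\eta\})=0$.
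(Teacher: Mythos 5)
Your reduction via the modular identity to the two bounds $\tilde V(f\vee h)\le\tilde V(h)+3\epsilon/8$ and $\tilde V(f\wedge h)\le\tilde V(h)+3\epsilon/8$ is sound, the arithmetic checks out, and the overall route (symmetric treatment of $f\vee h$ and $f\wedge h$ against a near-minimizer $h$) is genuinely different from the paper's. But there is a gap at the cancellation step. Once you replace $A=\{f>h\}$ by $A_\eta=\{f>h+\eta\}$, the set $S^{n-1}\setminus\overline{A_\eta}$ contains points where $h<f\le h+\eta$; on the support of $\varphi_O$ you therefore only know $f\le h+\eta$, not $f\le h$, so $(f\vee h)\varphi_O$ and $h\varphi_O$ agree only up to $\eta$ in the sup norm and the claimed exact cancellation of the $O$-terms fails. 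This is not cosmetic: to convert $\|(f\vee h)\varphi_O-h\varphi_O\|_\infty\le\eta$ into a bound on $|\tilde V((f\vee h)\varphi_O)-\tilde V(h\varphi_O)|$ you must invoke continuity of $\tilde V$ at $h\varphi_O$, but $\varphi_O$ depends on $A_\eta$ and hence on $\eta$, so you cannot choose $\eta$ after the modulus of continuity --- exactly the circularity the paper's proof is engineered to avoid. Keeping $A=\{f>h\}$ itself does not work either: then $H$ must contain $\overline{A}$, whose measure may exceed $m(G\setminus K)$, and your genericity remark (which ensures $m(\overline{A_\eta})=m(A_\eta)$) does not resolve this conflict between ``$\varphi_O$ vanishes on all of $\{f>h\}$'' and ``$m(H)$ is small''.

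The gap is repairable within your framework by perturbing the function rather than only the set: put $u_\eta=(f-\eta)^+\vee h$. Then $u_\eta=h$ off $A_\eta$, so the cancellation against $\varphi_O\prec S^{n-1}\setminus\overline{A_\eta}$ is genuinely exact, while $\|u_\eta-(f\vee h)\|_\infty\le\eta$, so the error $|\tilde V(f\vee h)-\tilde V(u_\eta)|$ is controlled by continuity of $\tilde V$ at the \emph{fixed} function $f\vee h$, whose modulus can be fixed before $\eta$ is chosen (and analogously $(f+\eta)\wedge h$ for the infimum). With that change your argument goes through and gives an alternative to the paper's proof, which instead approximates $f$ from above by $g\vee\varphi\tilde f$, where $g$ is a near-minimizer supported in a level set of $f$ and $\varphi\tilde f$ lives on a set of small measure, using two nested continuity moduli in a carefully chosen order.
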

\begin{proof}For simplicity in the notation we write the proof for the case $\lambda=1$, and we just write $\vartheta, \nu$ for $\vartheta_1, \nu_1$. The general case is totally analogous. Let $K, \epsilon, G, f$ be as in the statement. $\tilde{V}$ is continuous at $f$. Therefore there exists $\delta_1$ such that $\tilde{V}(f)\leq \tilde{V}(h)+\frac{\epsilon}{4}$ whenever $\|f-h\|_\infty \leq \delta_1$. 

Let us consider the continuous function $\tilde{f}=\uno \wedge (1+\delta_1) f$ and the open set $G_1=f^{-1}\left((\frac{1}{1+\delta_1}, 1]\right)$. We apply Lemma \ref{lambdaG} to obtain a function $g$ such that $\tilde{V}(g)\leq \nu(K)+\frac{\epsilon}{4}$,  and  $ K\prec g\prec G_1$. This last fact implies that $g\leq \tilde{f}$. We use now the fact that $\tilde{V}$ is continuous at $\tilde{f}$ to obtain a $\delta_2$ such that $\tilde{V}(\tilde{f})\leq \tilde{V}(h)+\frac{\epsilon}{4}$ whenever $\|\tilde{f}-h\|_\infty \leq \delta_2$.

We define now $$C=\supp(f)\setminus g^{-1}\left(\left(\frac{1}{1+\delta_2}, 1\right]\right)$$ and $$H=G\setminus g^{-1}\left(\left[\frac{1}{1+\frac{\delta_2}{2}}, 1\right]\right).$$

$C$ is a closed set contained in the open set $H$. $C$ and $H^c$ are disjoint compact sets. So, we can choose disjoint open sets $U\supset C$, $W\supset H^c$ and we have that $\overline{U}\subset H$. We apply Urysohn's Lemma to the disjoint closed sets $C$ and $U^c$ and we obtain a  function $\varphi\in C(S^{n-1})^+$  verifying $C\prec \varphi \prec H$. 

We have  that $$\|\left(g\vee \varphi \tilde{f}\right)-\tilde{f}\|_\infty\leq 1-\frac{1}{1+\delta_2}< \delta_2.$$

To see this, note that if $t\not \in \supp(f)$ then $\tilde{f}(t)=g(t)=0$. If $t\in C$, then $\varphi(t)=1$ and, hence, $$(\varphi(t) \tilde{f}(t))\vee g(t)= \tilde{f}(t)\vee g(t)=\tilde{f}(t),$$
where the last equality follows from the fact that $g\leq \tilde{f}$.

Finally, if $t\in \supp(f)\setminus C$ then $\tilde{f}(t)=1$ and $\frac{1}{1+\delta_2}<g(t)\leq 1$. 

\smallskip

Note that $H\subset G\setminus K$ and therefore $m(H)\leq m(G\setminus K)\leq \frac{\epsilon}{4\vartheta}$. Hence, since $\varphi\tilde{f}\prec H$, Observation \ref{Vpequenha}  implies that $\tilde{V}(\varphi\tilde{f})\leq \frac{\epsilon}{4}$.

Finally, using the fact that $\|f-\tilde{f}\|_\infty < \delta_1$, we get $$\tilde{V}(f)\leq \tilde{V}(\tilde{f}) + \frac{\epsilon}{4} \leq \tilde{V}\left(g\vee \varphi \tilde{f}\right) + \frac{\epsilon}{2}=$$ $$=\tilde{V}(g)+ \tilde{V}(\varphi \tilde{f})- \tilde{V}\left(g\wedge \varphi \tilde{f}\right)+ \frac{\epsilon}{2}\leq  \tilde{V}(g)+ \tilde{V}(\varphi \tilde{f}) + \frac{\epsilon}{2}\leq \nu(K)+ \epsilon.$$

\end{proof}






Now we can prove our main result.

\begin{proof}[Proof of  Theorem \ref{main}]
We prove first the first statement of the Theorem. Let  $V$ be as in the hypothesis.  
We consider the family of measures $\nu_\lambda$ defined in the previous section. 

For every $\lambda\geq 0$,  $\nu_\lambda$ is rotationally invariant and, hence, proportional to the Lebesgue measure. Let us call $\theta(\lambda)$ the positive number that verifies $\nu_\lambda=\theta(\lambda) m$, and we define $\theta(0)=0$.

Then, recalling that $V(\{0\})=0$, for every $\lambda\geq 0$ we have $$\theta(\lambda)m(S^{n-1})=\nu_\lambda(S^{n-1})=\tilde{V}(\lambda\uno)=V(\lambda B_{\mathbb R^n}).$$

Therefore, it follows from the continuity of $V$ that the function \begin{equation}\label{theta}\theta: [0, \infty) \longrightarrow [0, \infty)\end{equation}
defined by $$\lambda\mapsto \theta(\lambda)$$ 
is continuous.

We consider a function $f\in C(S^{n-1})^+$. We want to see that $$\tilde{V}(f)=\int_{S^{n-1}} \theta(f(t)) dm(t).$$

For a given   $\delta>0$,  let $N=\left[\frac{\|f\|_\infty}{\delta}\right]+1$, where $[a]$ is the integer part of $a$.

For every $l\in \mathbb N$ we define $D_l=\{\lambda: \, m\left(f^{-1}(\{\lambda\})\right)\geq \frac{1}{l}\}$. Since $m(S^{n-1})<\infty$, we have that $D_l$ is finite and therefore $D=\bigcup_{l\in \mathbb N} D_l$ is at most numerable. As a result, we get that for every $1\leq i \leq N$ there exists a $\delta_i\in \mathbb R$ such that $|\delta_i-i\delta|<\frac{\delta}{100}$ and $m\left(f^{-1}(\{\delta_i\})\right)=0$. We pick $\delta_N$ such that it addtionally verifies $\delta_N\geq\|f\|_\infty$.

We define $A_1=f^{-1}\left([0, \delta_1)\right)$ and for every $2\leq i \leq N$ we define  $$A_i=f^{-1}\left((\delta_{i-1}, \delta_i)\right).$$

For $1\leq i \leq N$ we define also the sets $C_i=f^{-1}(\{\delta_i\})$.

Next, we consider the  simple function $g_\delta:S^{n-1}\longrightarrow \mathbb R^+$ defined by $$g_\delta(t)=\sum_{i=1}^{N} \delta_i \chi_{A_i\cup C_i}(t).$$

For every $\delta>0$, $\|g_\delta - f\|_\infty\leq 2\delta$. Since $\theta$ is uniformly continuous in $[0,\|f\|_\infty+1]$, we get that $\|\theta(g_\delta)-\theta(f)\|_\infty$ converges to $0$ as $\delta$ converges to $0$. That is, $\theta(g_\delta)$ converges to $\theta(f)$ in the norm topology of $B(\Sigma_n)$, the bounded Borel functions on $S^{n-1}$. Therefore, since the application $g\mapsto \int_{S^{n-1}} gdm$ belongs to $B(\Sigma_n)^*$, we get that 
$$\lim_{\delta\rightarrow 0} \int_{S^{n-1}} \theta(g_\delta(t)) dm(t)=\int_{S^{n-1}} \theta(f(t)) dm(t).$$  

We note that $$\int_{S^{n-1}} \theta(g_\delta(t)) dm(t)=\sum_{i=1}^{N} \theta(\delta_i)m(A_i).$$

Therefore, to prove the first part of the Theorem is suffices to check that for every $\epsilon>0$ there exists $\Delta>0$ such that for every $\delta<\Delta$, $$\sum_{i=1}^{N} \theta(\delta_i)m(A_i)- 3\epsilon\leq \tilde{V}(f) \leq \sum_{i=1}^{N} \theta(\delta_i)m(A_i) + 3\epsilon.$$

Using the  definition of $\nu_\lambda$ and $\theta$, we can write the previous inequality as 
\begin{equation}\label{desigualdad}\sum_{i=1}^{N} \nu_{\delta_i}(A_i)- 3\epsilon\leq \tilde{V}(f) \leq \sum_{i=1}^{N} \nu_{\delta_i}(A_i) + 3\epsilon.
\end{equation}

First we check the first inequality. We fix $\epsilon>0$. 
Since $\tilde{V}$ is continuous at $f$, there exists $\Delta>0$ such that $|\tilde{V}(\tilde{f})-\tilde{V}(f)|\leq \epsilon$ whenever $\|\tilde{f}- f\|\leq 2\Delta$. We pick $\delta<\Delta$ and we define $N, \delta_i, g_\delta$ as above.

For every $1\leq i \leq N$, $A_i$ is open and  $\nu_{\delta_i}$, is regular. Therefore, we can  choose a compact set $K_i\subset A_i$ 
such that  $\nu_{\delta_i}(A_i)\leq \nu_{\delta_i}(K_i)+\frac{\epsilon}{N}$.  Note that $K_i\cap K_j=\emptyset$ if $i\not = j$. 

$K_1, \ldots, K_N$, $C_1, \ldots, C_N$ are disjoint compact sets. We consider pairwise disjoint open sets $V_1,\ldots, V_N$, $H_1, \ldots, H_N$ such that for every $1\leq i \leq N$, $K_i\subset V_i$ and $C_i\subset H_i$. 

Since $m(C_i)=0$, we may choose $H_i$ such that $m(H_i)\leq \frac{\epsilon}{N^2 \vartheta_{\delta_N}}$, where  $\vartheta_{\delta_N}$ is defined as in Observation \ref{Vpequenha}.

$A_i^c$ and $K_i$ are disjoint compact sets. Then we can take disjoint open sets $V'_i\supset K_i$, $W_i\supset A_i^c$. For every $1\leq i \leq N$ we define now $U_i=V_i\cap V'_i$ and we have $K_i\subset U_i\subset \overline{U}_i\subset A_i$. 

We now use Urysohn's Lemma again and, for every $1\leq i \leq N$, we can  consider a function $\psi_i\in C(S^{n-1})^+$ such that $\psi_i(t)=\delta_i$ for every $t\in K_i$,  $\psi_i(t) \leq \delta_i$ for every $t\in S^{n-1}$, and $\psi_i(t)=0$ for every $t\in U^c_i$.

We define now $$\tilde{f}=f\vee\left (\bigvee_{i=1}^N \psi_i\right).$$

Then $\tilde{f}(t)=\delta_i$ for every $t\in K_i$ and $\|\tilde{f}-f\|_\infty\leq 2\delta$.

We define $A_0=\cup_{i=1}^N H_i$. Then $m(A_0)\leq \frac{\epsilon}{N \vartheta_{\delta_N}}$ and the collection $\{A_i: 0\leq i \leq N\}$ is an open cover of $\supp (f)$ .



Now we apply Lemma \ref{partition} to the family $\{A_i: 0\leq i \leq N\}$ and we obtain the functions $\varphi_i$  ($0\leq i \leq N$) as in that lemma. We note that for $1\leq i \leq N$, $K_i\prec \varphi_i$.

We define the functions $\tilde{f}_i=\tilde{f}\varphi_i$, $0\leq i \leq N$.  As in the proof of Lemma \ref{split} we see  that they are all continuous and $\tilde{f}=\bigvee_{i=0}^N \tilde{f}_i$. 
They also verify $K_i\prec \frac{\tilde{f}_i}{\delta_i}$. Therefore, $\nu_{\delta_i}(K_i)\leq \tilde{V}(\tilde{f}_i)$ for $1\leq i \leq N$.

It follows from Lemma \ref{uniones} and the fact that $\tilde{f}_i\wedge \tilde{f}_j=0$ for every $1\leq i < j\leq N$ that  

$$\sum_{i=0}^N \tilde{V}(\tilde{f}_i)= \tilde{V}\left(\bigvee_{i=0}^N \tilde{f}_i\right)+ \sum_{i=1}^{N} \tilde{V}(\tilde{f}_{i}\wedge \tilde{f}_{0}).$$

Note that $\supp (\tilde{f}_i\wedge \tilde{f}_0)\subset A_i\cap A_0$ and $m(A_i\cap A_0)\leq m(A_0)\leq   \frac{\epsilon}{N \vartheta_{\delta_N}}$. Therefore, Observation \ref{Vpequenha} guarantees that $\sum_{i=1}^N \tilde{V}(\tilde{f}_i\wedge \tilde{f}_0)\leq \epsilon$. Hence

$$\sum_{i=0}^N \tilde{V}(\tilde{f}_i)  \leq \tilde{V}\left(\bigvee_{i=0}^N \tilde{f}_i\right)+ \epsilon = \tilde{V}(\tilde{f})+\epsilon \leq \tilde{V}(f)+2\epsilon. $$

Putting things together, we have 

$$\sum_{i=1}^{N} \theta(\delta_i)m(A_i)=\sum_{i=1}^{N} \nu_{\delta_i}(A_i)\leq \sum_{i=1}^{N} \nu_{i\delta}(K_i)+ \epsilon \leq \sum_{i=1}^{N} \tilde{V}(\tilde{f}_i)+ \epsilon\leq $$ $$\leq \sum_{i=0}^{N} \tilde{V}(\tilde{f}_i)+ \epsilon \leq \tilde{V}(f)+3\epsilon.$$

\bigskip

We prove now the second inequality in (\ref{desigualdad}). We let $\epsilon$, $\Delta$, $ \delta$, $ A_i$, $ K_i$, $ \varphi_i$, $ \tilde{f}$, $ \tilde{f}_i$ be as in the first part of the proof, with the following extra condition: For $1\leq i \leq N$, we require that   $$m(A_i\setminus K_i)\leq \frac{\epsilon}{8N \vartheta_{\delta_N}}.$$

Clearly the $K_i$'s can be chosen to meet this additional requirement. 
Note that now we have  $K_i \prec \frac{\tilde{f}_i}{\delta_i} \prec A_i$ and $m(A_i\setminus K_i)\leq \frac{\epsilon}{8N \vartheta_{\delta_N}}$. Therefore, Lemma \ref{pequenho} implies that $\tilde{V}(\tilde{f}_i)\leq \nu_{\delta_i}(K_i)+ \frac{\epsilon}{2N}.$

Since  $m(A_0)<\frac{\epsilon}{N\vartheta_{\delta_N}}$,
Observation \ref{Vpequenha} implies that $\tilde{V}(\tilde{f}_0)\leq \frac{\epsilon}{N}\leq \epsilon$.

So, we have  $$\tilde{V}(f)\leq \tilde{V}(\tilde{f}) + \epsilon=\tilde{V}\left(\bigvee_{i=0}^N \tilde{f}_i\right)+ \epsilon=\sum_{i=0}^N \tilde{V}(\tilde{f}_i) - \sum_{i=1}^{N} \tilde{V}(\tilde{f}_i\wedge \tilde{f}_0) + \epsilon\leq $$ $$\leq \sum_{i=1}^{N} \tilde{V}(\tilde{f}_i)+ \tilde{V}(\tilde{f}_0) + \epsilon  \leq \sum_{i=1}^{N} \nu_{\delta_i}(K_i) +  3\epsilon  \leq  \sum_{i=1}^{N}\theta(\delta_i) m(A_i) + 3\epsilon.$$


\

\bigskip

We prove now the second half of the statement. Let $\theta$ be as in the hypothesis, and, for every star body $K$, define $$V(K)=\int_{S^{n-1}} \theta(\rho_K(t)) dm(t).$$

It is immediate that $V$ is well defined. Let us see that it is a valuation. Let $K, L$ be star bodies. Let 

$$C_1=\{t\in S^{n-1}: \rho_K(t) \geq \rho_L(t)\}$$
$$C_2=S^{n-1}\setminus C_1.$$

Using again that  $\rho_{K\cup L}=\rho_K \vee \rho_L $ and $\rho_{K\cap L}= \rho_K \wedge \rho_L$, we have 
 $$V(K\cup L) + V(K\cap L) =\int_{S^{n-1}} \theta(\rho_{K\cup L}(t)) dm(t) +
\int_{S^{n-1}} \theta(\rho_{K\cap L}(t)) dm(t)=$$ $$=\int_{C_1} \theta(\rho_{K}(t)) dm(t) +\int_{C_2} \theta(\rho_{ L}(t)) dm(t)+ $$ $$+\int_{C_1} \theta(\rho_{L}(t)) dm(t)+ \int_{C_2} \theta(\rho_{K}(t)) dm(t)=V(K)+V(L).$$

\smallskip

Let us see that $V$ is continuous. Let $K$ be a radial body and let $(K_i)_{i\in \mathbb N}$ be a sequence of radial bodies converging to $K$ in the radial metric. As we mentioned before,   $K_i$ converges to $K$ in the radial metric if and only if $\rho_{K_i}$ converges to $\rho_K$ in the uniform norm. It follows from the compactness of $S^{n-1}$ that $\rho_K$ is bounded. So, there exists a closed bounded interval $I\subset \mathbb R$ such that $\rho_K(t), \rho_{K_i}(t) \in I$ for every $t\in S^{n-1}$, $i\in \mathbb N$. Now, $\theta$ is uniformly continuous in $I$, and it follows immediately that $\theta(\rho_{K_i})$ converges to $ \theta(\rho_{K})$ in the uniform norm. Using again that the linear mapping $f\mapsto \int_{S^{n-1}} f dm$ belongs to $C(S^{n-1})^*$, we have that 

$$V(K_i)=\int_{S^{n-1}}\theta(\rho_{K_i}(t)) dm(t)\rightarrow \int_{S^{n-1}} \theta(\rho_{K}(t)) dm(t) =V(K).$$

\smallskip

The fact that  $V$ is rotation invariant follows immediately from the rotational invariance of $m$:  Let $K\in \mathcal S^n$ and let $\varphi$ be a rotation in $S^{n-1}$. Note that $\rho_{\varphi(K)}(t)=\rho_{K}(\varphi^{-1}(t))$. We have 

$$V(\varphi(K))=\int_{S^{n-1}} \theta(\rho_{\varphi(K)}(t)) dm(t)=\int_{S^{n-1}} \theta(\rho_{K}(\varphi^{-1}(t))) dm (t) = $$ $$ =\int_{S^{n-1}} \theta(\rho_{K}(t)) dm(t)=V(K).$$

\end{proof}

\section{Polynomial valuations  and dual quermassintegrals}\label{polinomicas}

In this section we will define polynomial valuations on convex bodies and we characterize them using the results of \cite{JiVi}. We show their connection with the dual quermassintegrals and  we  prove Corollary \ref{maincorolario}. 

We say that an application $T: (\mathcal S^n_0)^m\longrightarrow \mathbb R $ is a $k$-linear application if $T$ is  separately additive and positively homogeneous. This means that for every $L, L', L_2, \ldots, L_k \in \mathcal S_0^n$ and for every $\alpha>0, \beta>0$, $$T(\alpha L \tilde{+} \beta L', L_2, \ldots, L_k)=\alpha T(L, L_2, \ldots, L_k) +\beta T(L', L_2, \ldots, L_k),$$
where the role played by the first variable could also be played by any of the other variables.

We say that an application $P:\mathcal S^n_0\longrightarrow \mathbb R$ is a {\em $k$-homogeneous  polynomial} if there exists a $k$-linear application $T:(\mathcal S^n_0)^k\longrightarrow \mathbb R $ such that for every $L\in \mathcal S^n_0$, $P(L)=T(L, \ldots, L)$. 

We say that a valuation $V:\mathcal S_0^n\longrightarrow \mathbb R$ is a $k$-homogeneous {\em polynomial valuation} if $V$ is a $k$-homogeneous polynomial.  


The following result follows immediately from \cite[Theorem 3.4]{JiVi}. 

\begin{prop}\label{caracpolinomicas}
Let $V:\mathcal S_0^{n-1}\longrightarrow \mathbb R$ be a radial continuous $k$-homogeneous polynomial valuation. Then there exists a regular signed measure $\mu: \Sigma_n\longrightarrow \mathbb R$ such that, for every $L\in \mathcal S_0^n$, $$V(L)=\int_{S^{n-1}} (\rho_{L}(t))^k d\mu(t). $$

Conversely, for every regular signed measure $\mu: \Sigma_n\longrightarrow \mathbb R$, the above integral expression defines a radial continuous $k$-homogeneous polynomial valuation. 

Moreover, $V$ is rotationally invariant if and only if there exists a constant $c\in \mathbb R$ such that $\mu=cm$, where $m$ is the Lebesgue measure in $S^{n-1}$. 
\end{prop}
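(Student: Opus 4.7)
The plan is to reduce this proposition directly to the main representation theorem of \cite{JiVi}. First I will use the bijection between star bodies and their radial functions to transfer $V:\mathcal S_0^n\to\mathbb R$ to an application $\tilde V:C(S^{n-1})^+\to\mathbb R$, and similarly the underlying $k$-linear map $T$ becomes a map $\tilde T:(C(S^{n-1})^+)^k\to\mathbb R$ that is additive and positively homogeneous in each variable. Writing $f=f^+-f^-$ allows a unique extension of $\tilde T$ to a genuine $k$-linear form on the whole space $C(S^{n-1})^k$. The valuation identity $V(K\cup L)+V(K\cap L)=V(K)+V(L)$ translates, via $\rho_{K\cup L}=\rho_K\vee\rho_L$ and $\rho_{K\cap L}=\rho_K\wedge\rho_L$, into the lattice identity $\tilde V(f\vee g)+\tilde V(f\wedge g)=\tilde V(f)+\tilde V(g)$ on $C(S^{n-1})^+$.

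Second, I will invoke \cite[Theorem 3.4]{JiVi}, whose hypotheses are exactly matched by the above setup, to produce a regular signed Borel measure $\mu$ on $S^{n-1}$ such that $\tilde V(f)=\int_{S^{n-1}} f^k\,d\mu$ for every $f\in C(S^{n-1})^+$. Translating back via $f=\rho_L$ gives the announced integral representation for $V$.

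For the converse direction, I will verify the required properties of the map $V(L):=\int_{S^{n-1}}\rho_L^k\,d\mu$ directly: the polynomial structure comes from the polarization $T(L_1,\ldots,L_k):=\int_{S^{n-1}} \rho_{L_1}\cdots\rho_{L_k}\,d\mu$, which is clearly $k$-linear; radial continuity follows from the uniform convergence $\rho_{K_i}\to\rho_K$ combined with finiteness of the total variation $|\mu|$; and the valuation identity reduces to the pointwise identity $(a\vee b)^k+(a\wedge b)^k=a^k+b^k$ after splitting $S^{n-1}=\{\rho_K\ge\rho_L\}\cup\{\rho_K<\rho_L\}$, essentially the same computation carried out at the end of the proof of Theorem \ref{main}.

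Finally, for the characterization of rotational invariance, I will use the uniqueness of $\mu$ in the JiVi representation: if $V$ is rotation invariant and $\varphi$ is any rotation of $S^{n-1}$, the pushforward $\varphi_*\mu$ represents the same valuation and hence must coincide with $\mu$, so $\mu$ itself is rotation invariant. A standard argument (rotation-invariant regular signed Borel measures on the sphere are scalar multiples of $m$, by a Hahn decomposition plus uniqueness of Haar measure on the homogeneous space $S^{n-1}$) then yields $\mu=cm$. The only real obstacle is the bookkeeping step of checking that the hypotheses of \cite[Theorem 3.4]{JiVi} align precisely with what we have after the radial-function translation; once that is done, everything else is routine.
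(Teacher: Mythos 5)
Your proposal is correct and takes the same route as the paper: the paper's entire proof of this proposition is the single sentence that it ``follows immediately from [JiVi, Theorem 3.4],'' and what you write out is a reasonable unpacking of that reduction (the translation $V\leftrightarrow\tilde V$ via radial functions, the converse verified directly, and rotation invariance via uniqueness of the representing measure plus the fact that a rotation-invariant regular signed Borel measure on $S^{n-1}$ must be a scalar multiple of $m$). The only step whose necessity depends on the precise formulation of [JiVi, Theorem 3.4] is the extension of $\tilde T$ to a full $k$-linear form via $f=f^+-f^-$; it is harmless and standard, but if the JiVi theorem is already phrased for valuations on $\mathcal S_0^n$ (equivalently on $C(S^{n-1})^+$) that detour is not required.
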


\begin{remark}
Note that this says that the only radial continuous $k$-homogenous rotationally invariant polynomial valuations are the constant multiples of the corresponding {\em dual quermassintegral} $\tilde{W}_{n-k}$ (see \cite{Gabook} for the definition). 
\end{remark}

The proof of Corollary \ref{maincorolario} follows now  easily:

\begin{proof}[Proof of Corollary \ref{maincorolario}] Let $V$ be as in the hypothesis. Let $B\subset \mathcal S_0^n$ be a bounded set. That is, there exists $M>0$ such that for every star body $K\in B$, and for every $t\in K$, $\|t\|\leq M$. Equivalently, for every $K\in B$, $\|\rho_K\|_\infty\leq M$. 

Let $\theta:\mathbb R\longrightarrow \mathbb R^+$ be the function associated to $V$ by Theorem \ref{main}.  It follows from the  Stone-Weierstrass Theorem that for every $\epsilon>0$ there exists $l\in \mathbb N$ and real numbers $a_0, \ldots, a_l$ such that for every $\lambda\in [0,M]$, $$|\theta(\lambda)-\sum_{k=0}^l a_k \lambda^k|<\epsilon.$$

For every $0\leq k \leq l$ we define the polynomial valuation $P_k$ by $$P_k(K)=a_k\int_{S^{n-1}} \rho_K^k(t) dm(t)=a_k n \tilde{W}_{n-k}.$$ 

Then we have that, for every $K\in B$, $$\left|V(K)- \sum_{k=0}^l a_k n \tilde{W}_{n-k}(K)\right|=$$ $$=\left|\int_{S^{n-1}} \theta(\rho_K(t)) dm(t) - \int_{S^{n-1}}\sum_{k=0}^l a_k \rho_K^k(t) dm(t)\right| \leq $$ $$\leq \int_{S^{n-1}}\left| \theta(\rho_K(t)) - \sum_{k=0}^l a_k \rho_K^k(t) \right| dm(t) \leq \epsilon m(S^{n-1}).$$ 
\end{proof}

\begin{centerline} {\em Acknowledgments} \end{centerline}
\smallskip

We would like to  thank  Marco Castrill\'on, Antonio Su\'arez Granero and Pedro Tradacete for helpful discussions. We also would like to thank the anonymous referee for an extremely careful scrutiny of the first version of the manuscript and for his or her  suggestions.

\end{document}